\newcommand{\cmark}{\ding{51}}%
\newlength{\tempdima}
\newcolumntype{M}[1]{>{\centering\arraybackslash}m{#1}}
\title{
BISTRO - A Bi-Fidelity Stochastic Gradient Framework using Trust-Regions for Optimization Under Uncertainty
\thanks{Submitted to arXiv on December 9th, 2025.
\funding{This work was funded by the NASA Project: Entry Systems Modeling under grant no. 80NSSC22K1007.}}
}
\author{Thomas O. Dixon \thanks{Department of Aerospace Engineering, University of Michigan, Ann Arbor, MI 
  (\email{tdixono@umich.edu}, \email{goroda@umich.edu}).}
\and Geoffrey F. Bomarito \thanks{Durability, Damage Tolerance, and Reliability Branch, NASA Langley Research Center, Hampton, VA  
  (\email{geoffrey.f.bomarito@nasa.gov},\email{james.e.warner@nasa.gov}).}
  \and James E. Warner \footnotemark[3]
\and Alex A. Gorodetsky \footnotemark[2] }
\begin{document}

\maketitle

\begin{abstract}
Stochastic optimization of engineering systems is often infeasible due to repeated evaluations of a computationally expensive, high-fidelity simulation.
Bi-fidelity methods mitigate this challenge by leveraging a cheaper, approximate model to accelerate convergence.
Most existing bi-fidelity approaches, however, exploit either design-space curvature or random-space correlation, not both.
We present BISTRO - a BI-fidelity Stochastic Trust-Region Optimizer for unconstrained optimization under uncertainty through a stochastic approximation procedure.
This approach exploits the curvature information of a low-fidelity objective function to converge within a basin of a local minimum of the high-fidelity model where low-fidelity curvature information is no longer valuable. The method then switches to a variance-reduced stochastic gradient descent procedure.
We provide convergence guarantees in expectation under certain regularity assumptions and ensure the best-case $\mathcal{O}(1/n)$ convergence rate for stochastic optimization.
On benchmark problems and a 20-dimensional space shuttle reentry case, BISTRO converges faster than adaptive sampling and variance reduction procedures and cuts computational expense by up to 29x.
\end{abstract}

\begin{keywords}
simulation optimization, stochastic optimization, uncertainty quantification, multifidelity, trust region
\end{keywords}

\begin{MSCcodes}
65C05, 90C15, 65K05, 62L20
\end{MSCcodes}

\section{Introduction}

Simulation-based optimization is a fundamental component to engineering design and decision-making, especially when direct experimentation with complex systems is too costly or has high risk \cite{hannah2014semiconvex,glowinski1995simulation}.
Computational models provide a useful surrogate for the real systems that can be used to guide decisions.
Computational models of real-world systems, however, are inherently imprecise.
Modeling these systems often requires both uncertain parameters and inherent randomness to account for incomplete knowledge of the real world.

Applications ranging from financial markets \cite{yin2002recursive} to energy system control \cite{roald2023power} must optimize under uncertainty to produce robust and reliable designs that reduce risk. Risk metrics such as probabilities of failure, however, cannot be evaluated explicitly for nonlinear, black-box systems.
Sampling-based approaches are instead employed to quantify the risk. 

Sampling-based methods for optimization under uncertainty are largely grouped into either sample average approximation (SAA) \cite{fu2015handbook, homem2014stochastic, shapiro1991asymptotic} or stochastic approximation (SA) \cite{bottou2018optimization, fehrman2020convergence, pasupathy2018sampling}.
SAA is an approach that fixes the sample set used for estimation, converting a stochastic optimization problem into a deterministic one. Since the optimization solution depends on these fixed samples, the underlying optimal solution is a random variable whose variance depends on the number of samples drawn.
The SA approach instead draws new samples during optimization and converges to the underlying optimal solution.
This work addresses optimization under uncertainty with SA because the true solution is desired asymptotically.

Unlike SAA, the SA approach is inherently random, necessitating stochastic optimization procedures \cite{zhao2018data,qiu2015robust,menhorn2024multilevel,chaudhuri2020risk}.
When the objective function is a sampling-based risk estimate, the computer simulation of a complex system needs to be evaluated potentially hundreds or thousands of times during optimization \cite{alarie2021optimization, toscano2022bayesian}.
When the simulation involves a high-fidelity computer model, each evaluation of the simulation can be expensive, taking from hours to days to run.
With such an expensive simulation, stochastic optimization becomes infeasible.
To reduce the computational requirements of engineering design under uncertainty, the variance of the objective function or gradient estimates can be reduced with variance-reduction strategies \cite{schmidt2017minimizing,johnson2013accelerating,shashaani2018astro}. 

Alternately, cheaper and less accurate simulations have been used instead of the highest-fidelity simulation available to reduce computational requirements \cite{eldred2002formulations,acar2021modeling}.
While a solution may be found using this low-fidelity simulation, it is a sub-optimal solution with respect to the high-fidelity, expensive simulation.
To maintain high-fidelity accuracy while reducing computational cost, multi-fidelity methods have been proposed \cite{ha2025multi,de2020bi,menhorn2024multilevel}.
Multi-fidelity methods offer a promising path forward by strategically combining models of different computational costs and accuracies.
The central idea is to use cheap, lower-fidelity models to guide the expensive, high-fidelity optimization while maintaining solution quality.
The majority of existing multi-fidelity optimization under uncertainty methods typically exploit one of two key relationships: either the correlation structure in the random space (using variance reduction techniques like multi-level Monte Carlo) \cite{dereich2019general,frikha2016multi,menhorn2024multilevel,agrawal2023multi,ng2014multifidelity,de2020bi,ha2025multi} or the curvature/geometry information in the design space (using trust-region or surrogate-based approaches) \cite{korondi2021multi,shah2015multi,ha2025multi}.
Methods that only exploit correlation reduce the variance of stochastic estimates but cannot leverage low-fidelity curvature to take informed steps.
Conversely, methods that only exploit curvature can rapidly navigate the design space but cannot reduce the inherent noise in estimation.

We propose that multi-fidelity SA methods would benefit from exploiting both of these relationships, correlation and curvature.
We call our proposed algorithm BISTRO (BI-fidelity Stochastic Trust-Region Optimizer); it is a gradient-based algorithm to exploit both design-space curvature and random-space correlation across two fidelities.
BISTRO combines a bi-fidelity trust-region procedure with control variate-based gradient estimation, enabling rapid exploration of the design space while efficiently handling noise in stochastic estimates.
The algorithm operates in two phases: (1) an initial trust-region phase that uses low-fidelity curvature to quickly navigate to promising regions, and (2) a variance-reduced stochastic gradient descent phase \cite{dereich2019general} that ensures asymptotic convergence with optimal rates once in a basin near the local minimum. 

Our approach attempts to mitigate the issues found with previous multi-fidelity optimization under uncertainty algorithms.
Previous trust-region approaches have demonstrated that low-fidelity models can deteriorate the performance of optimization if the low-fidelity model does not have useful information to exploit \cite{march2012provably,babcock2024multi,ha2025multi}.
Specifically, the low-fidelity model is typically dissimilar near the high-fidelity solution since the low-fidelity usually does not share its optimum with the high-fidelity model.
Thus, the multi-fidelity trust-region algorithms may struggle to utilize the low-fidelity model near the high-fidelity solution, wasting resources on evaluating the low-fidelity model. 
BISTRO tackles this problem by terminating the use of the trust-region near the high-fidelity solution and switching to a variance-reduced gradient descent algorithm instead.
By dropping the trust-region approach near the high-fidelity solution, we ignore the potential curvature information and instead focus on variance reduction.
This tactic may significantly reduce the cost of optimization in two manners: 
\begin{enumerate}
    \item Ignoring correlation far from the optimal solution reduces the number of low-fidelity evaluations when the statistical error is not dominant. The curvature is instead exploited to rapidly approach the solution.
    \item Ignoring curvature near the optimal solution reduces the number of low-fidelity evaluations when the statistical error is dominant. The correlation is instead exploited to significantly reduce the estimator variance.
\end{enumerate}
BISTRO can also be interpreted as an adaptive warm-starting technique that uses the curvature of the low-fidelity model to find an appropriate starting point for the reduced-variance stochastic gradient approach.
The previous work by \cite{dereich2019general} introduces a multi-level stochastic gradient descent that we adaptively warm-start with the trust-region phase.
In this work, we contribute further convergence guarantees and auxiliary conditions that are required for faster convergence compared to single-fidelity stochastic gradient descent.

Further, BISTRO deviates from prior stochastic trust-region methods in terms of the per-iteration budget.
Previous SA trust-region approaches rely on accurate estimators to ensure that the statistical error is smaller than a sufficient decrease criteria \cite{kouri2013trust, chen2018stochastic, deng2009variable, ha2025multi, ha2024adaptive,van2021mg}.
These adaptive approaches increase the estimator accuracy by increasing the number of samples drawn per iteration - gradually increasing the per-iteration cost.
Unless the number of samples increases according to convergence-rate-dependent sampling rates, these adaptive approaches are susceptible to being inefficient, i.e., not achieving the fastest convergence rate per simulation evaluation \cite{pasupathy2018sampling}.
BISTRO avoids asymptotically increasing the per-iteration cost by appealing to stochastic gradient descent, which obtains the best-case accuracy rate of $\mathcal{O}(1/n)$ under regularity assumptions.
Because BISTRO does not appeal to increasing the per-iteration cost, the trust-region size also does not need to vanish for convergence. 
BISTRO uses a fixed-sized trust-region to ensure low-fidelity regularity assumptions and prohibit arbitrarily large step sizes.

Finally, while BISTRO is introduced as a bi-fidelity algorithm, it can be easily extended to an ensemble of models using the multi-level Monte Carlo estimator introduced in Section \ref{sec:UQ}.
Adding more low-fidelity models may further reduce the gradient estimator variance, leading to a faster asymptotic convergence of BISTRO.
Further, our proposed approach is able to minimize any estimator that is compatible with multi-level Monte Carlo estimation, but we only provide empirical results for expectation minimization.
Demonstrating minimization of other statistics, as well as constrained optimization, is left to future work.

Our key contributions include:
\begin{itemize}
    \item A novel algorithm that jointly exploits both curvature and correlation information across model fidelities, providing superior performance compared to methods that use only one type of information.
    \item Convergence guarantees to a stationary point and asymptotic convergence rates comparable to reduced-variance stochastic gradient descent while offering dramatically improved finite-sample performance.
    \item Theoretical conditions that demonstrate when the reduced-variance stochastic gradient descent converges faster than its single-fidelity counterpart.
    \item Comprehensive numerical validation on problems ranging from quadratic test functions to a 20-dimensional space shuttle reentry optimization, demonstrating up to 29-fold computational savings.
\end{itemize}

The remainder of this paper is organized as follows.
Section \ref{sec:background} provides background on multi-fidelity optimization, uncertainty quantification, and optimization under uncertainty.
Section \ref{sec:MFOuU} provides related works that tackle multi-fidelity optimization under uncertainty.
Section \ref{sec:method} introduces the BISTRO algorithm and describes its key components.
Section \ref{sec:theo_results} presents our theoretical convergence analysis.
Finally, Section \ref{sec:results} demonstrates the effectiveness of BISTRO through numerical experiments on increasingly complex test problems.

\section{Background}\label{sec:background}

A background is provided on multi-fidelity optimization in Section \ref{sec:MFO}, multi-fidelity uncertainty quantification in Section \ref{sec:UQ}, and optimization under uncertainty in Section \ref{sec:OoU}.

\subsection{Multi-fidelity optimization}\label{sec:MFO}

Consider the unconstrained optimization problem
\begin{align}
    x^* = \arg\min_{x\in \mathbb{R}^d} ~J(x,f(x)),
\end{align}
where the objective function $J:\mathbb{R}^d \times \Theta \to\mathbb{R}$ depends on the value of a computationally expensive simulation $f(x):\mathbb{R}^d \to \Theta \subset \mathbb{R}^{m} $ at the fixed decision variable $x\in\mathbb{R}^d$.
Numerical optimization algorithms require many evaluations of $J$ and, by extension, many queries of the simulation $f(x)$. A vast majority of optimization research seeks to minimize the number of $J$ queries by discovering and exploiting problem structure \cite{nocedal2006numerical, boyd2004convex}.

Another, complementary approach to lower the cost of optimization is to exploit lower-fidelity, less-expensive simulation models in place of, or in conjunction with, $f$. These lower-fidelity simulations can include approaches based on partial differential equations with a coarser grids, reduced-physics simulations, or even  empirical models. Optimization approaches that entirely discard $f$ and are based solely on these low-fidelity simulations produce solutions that deviate from those of the high-fidelity model. However, {\it multi-fidelity optimization} approaches seek to achieve the best of both worlds: high accuracy and low computational expense.

Let $J_H(x)\equiv J(x,f(x))$ represent the original objective function using the high-fidelity simulation and $J_L(x)\equiv J(x,f_L(x))$ denote a lower-cost approximation. 
Multi-fidelity methods strategically combine the two fidelities to accelerate convergence to the high-fidelity solution by reducing the number of expensive function evaluations.
The shared information between the high- and low-fidelity simulations is exploited to inform the optimization procedure. Such  methods can be broadly categorized into global and local methods. Global methods create a surrogate of the high-fidelity objective function $J_H$ by using models of varying fidelity. To search the entire domain, global methods typically use a Bayesian optimization method with the created surrogate \cite{forrester2007multi,pellegrini2023multi}. While some global methods provide convergence guarantees to a global minimum \cite{do2023multi}, they tend to scale poorly with the number of optimization variables \cite{eriksson2021high, binois2022survey}. Local methods, on the other hand, exploit shared high- and low-fidelity information in the neighborhood of the current optimization step \cite{alexandrov1998trust,march2011gradient,march2012provably,eldred2004second,robinson2008surrogate,bryson2018multifidelity,hart2023hyper,pellegrini2022derivative,wu2022gradient}. These methods often provide only local convergence guarantees, requiring convexity to provide global convergence proofs \cite{rodriguez1998convergence}. An intuitive local approach to multi-fidelity optimization is to use the low-fidelity optimal solution as an initial guess for a high-fidelity solver, deemed as warm-starting \cite{hart2023hyper,pellegrini2022derivative,wu2022gradient}.
While warm-starting does not require new optimization algorithms and reduces the number of high-fidelity evaluations, it does not exploit the full utility of the low-fidelity model and solely relies on the high-fidelity model to converge.

Instead of warm-starting, this work focuses on trust-region-inspired approaches.
Trust region methods create a local, approximate model of $J_H$ to search over in a trusted region around the current optimization step \cite{alexandrov1998trust}. Multi-fidelity trust-region methods use the low-fidelity model as the local approximation and are able to produce high-fidelity solutions at a significantly reduced cost \cite{march2011gradient,march2012provably,eldred2004second,robinson2008surrogate,bryson2018multifidelity,peherstorfer2018survey}.
Let $\{x_0,x_1,\cdots,x_k\}$ be the steps taken by an optimization procedure, such that as $k\to\infty$ the procedure converges $x_k\to x^*$ to the solution $x^*=\arg\min_x J_H(x)$. The trust-region approach minimizes a low-fidelity objective in a trusted region, centered at the current step $x_k$, to propose a new step $x_{k+1}$.
More specifically, the next iterate $x_{k+1}$ is the solution to the optimization problem
\begin{align}
    x_{k+1} = \arg\min_y ~J_L(y) \quad
    \text{subject to }\quad  ||y-x_{k}||\leq \Delta_{k} \nonumber
\end{align}
where $x_{k}$ is the center of the trust-region, and the size of the trust-region $\Delta_{k}$ grows or shrinks based on the similarity between $J_L$ and $J_H$.
Convergence to the high-fidelity solution also requires
\begin{align}
    J_H(x_{k}) = J_L(x_k) \quad\quad\textrm{and}\quad\quad\nabla J_H(x_{k}) = \nabla J_L(x_k) \label{eq:trust_requirements}
\end{align}
at the center of each trust region. This condition is often satisfied by updating the low-fidelity model, as discussed in Section \ref{sec:trust-region}.

Practically, if the low-fidelity objective $J_L$ is structurally similar to $J_H$, then most evaluations will use the cheaper simulator $f_L$. A more informative $J_L$ allows the trust-region size $\Delta_{k}$ to grow, limiting the number of high-fidelity evaluations while still converging to the high-fidelity solution.

\begin{figure}[t]
    \centering
    \includegraphics[width=0.49\linewidth,clip,trim={20pt 10pt 30pt 30pt}]{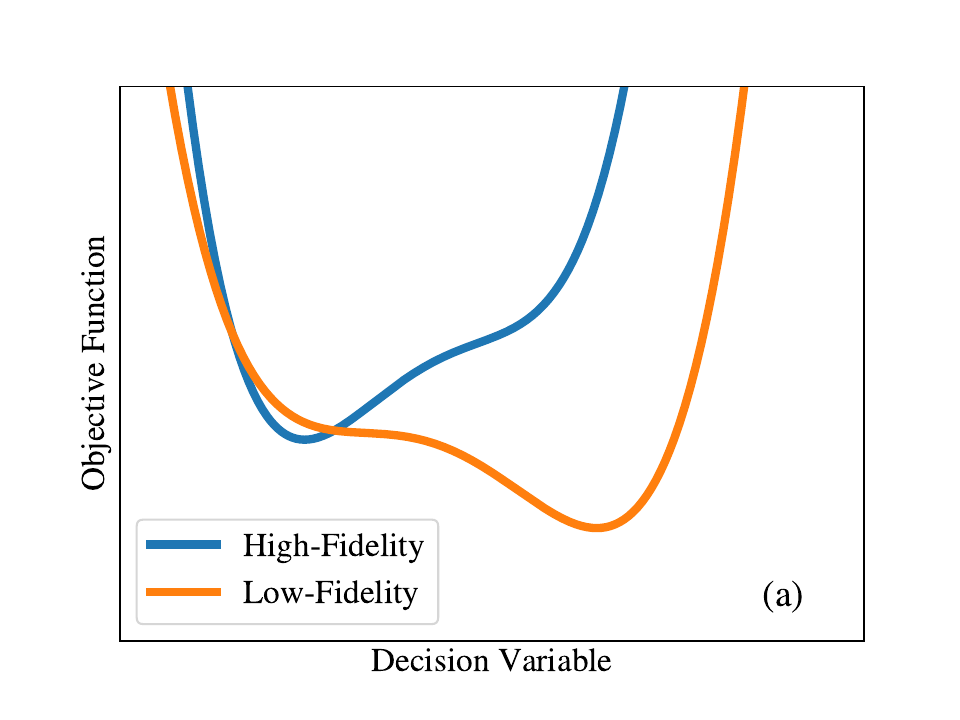}
    \includegraphics[width=0.49\linewidth,clip,trim={20pt 10pt 30pt 30pt}]{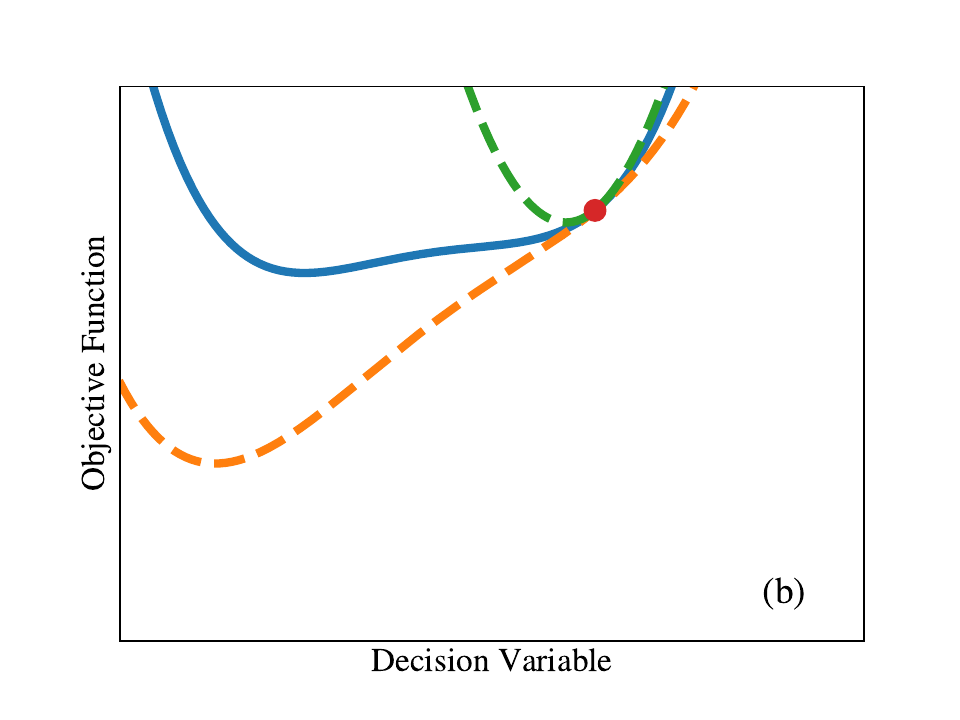}
    \includegraphics[width=0.49\linewidth,clip,trim={20pt 10pt 30pt 30pt}]{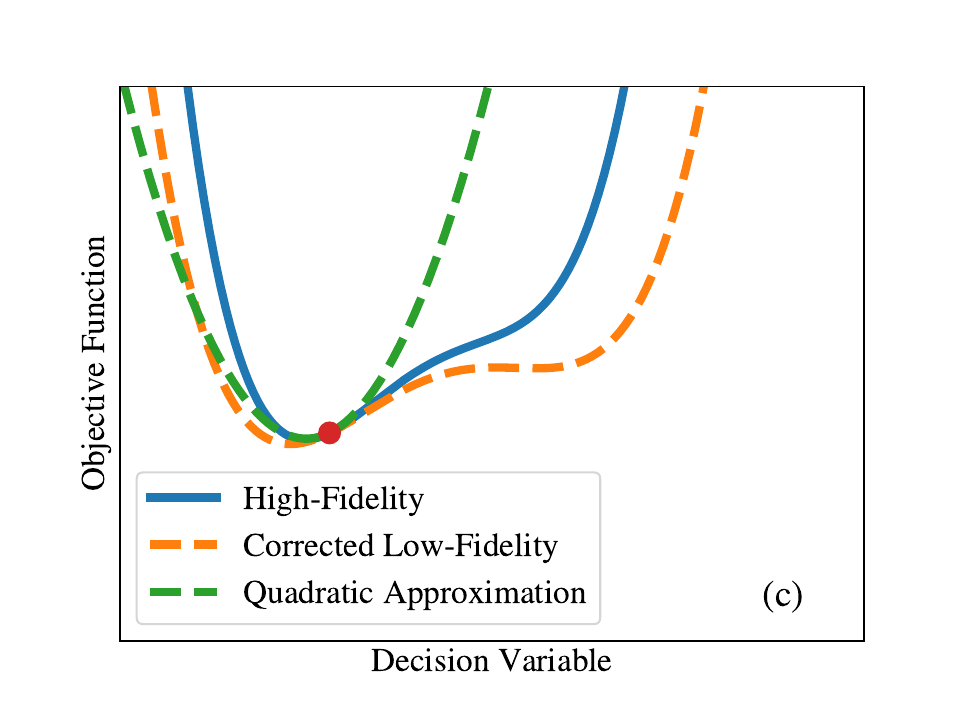}
    \includegraphics[width=0.49\linewidth,clip,trim={20pt 10pt 30pt 30pt}]{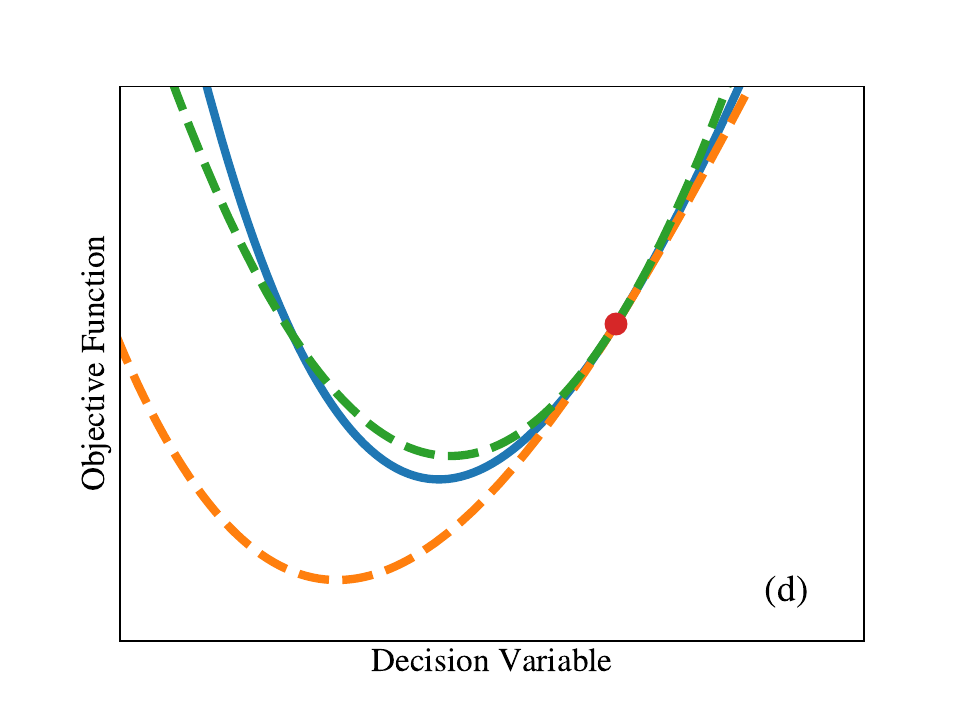}
    \caption{\textbf{Example} - (a) An example of high- and low-fidelity objective functions. (b) The corrected low-fidelity function and a quadratic approximation of the high-fidelity when corrected far from the high-fidelity solution. The corrected function's minimum is closer to the high-fidelity solution. (c) The corrected low-fidelity function and a quadratic approximation of the high-fidelity function when corrected near the high-fidelity solution.  (d) A close-up view of plot (c) near the solution. The quadratic approximation's minimum is closer to the high-fidelity solution. }
    \label{fig:example_system}
\end{figure}

One major flaw of multi-fidelity trust-region approaches is that the cost-benefit deteriorates when the low-fidelity is a poor approximation to the high-fidelity objective.
Previous works have shown that misleading low-fidelity models can slow the convergence of multi-fidelity trust-region algorithms, even performing worse than the single-fidelity counterparts \cite{march2012provably, babcock2024multi,ha2025multi}. 
Unless the low-fidelity objective shares the minimum of the high-fidelity objective (it does not in practice), the low-fidelity objective simply incurs additional cost to the optimizer without providing useful information.
Figure \ref{fig:example_system} displays an example system that demonstrates this tradeoff by comparing a low-fidelity model correction with a single-fidelity quadratic approximation correction that is often employed.
In this figure, the low-fidelity model is corrected at the red point in (b)-(d). In Figure \ref{fig:example_system}(b), when the function is corrected far from the high-fidelity minimum, the minimum of the corrected function is much closer to the high-fidelity minimum than the quadratic approximation.
When corrected near the solution, however, the quadratic approximation can outperform the corrected low-fidelity function, sometimes significantly, as seen in Figure \ref{fig:example_system}(c) and the close-up in Figure \ref{fig:example_system}(d).
This example demonstrates how the utility of a low-fidelity trust-region approach drops near the local minimum, but can be beneficial when starting far away. In other words, single-fidelity approaches will often outperform multi-fidelity approaches near the local minimum.

In the next section, we introduce multi-fidelity uncertainty quantification methods that exploit low-fidelity information for estimation procedures.

\subsection{Multi-fidelity uncertainty quantification}\label{sec:UQ}
Optimization under uncertainty minimizes a risk metric \cite{roald2023power}. Risk metrics can be computationally expensive to evaluate in high-dimensional problems because these settings rely on Monte Carlo sampling. To estimate the risk metrics more accurately under a fixed budget, low-fidelity models can again be used for precise, but biased, estimates.
Multi-fidelity estimation methods have been developed to provide unbiased estimates with potentially orders-of-magnitude of variance reduction for mean, variance, and other sampling-based estimators \cite{giles2008multilevel,gorodetsky2020generalized, dixon2024covariance}.

Formally, let $\xi$ be a vector of system parameters that is represented by a random variable with sample space $\Xi$ and known probability distributions.
We broaden the definition of the expensive simulation $f(x,\xi):\mathbb{R}^d\times \Xi\to\Theta$ to include the uncertain parameters $\xi$.
Since $\xi$ is a random variable, the simulation $f(x,\xi)$ also results in a random variable with sample space $\Theta$.
The probability density of $f(x,\xi)$ cannot be explicitly evaluated, however, when $f$ is a black-box simulation.
To quantify the uncertainty in $f(x,\xi)$, we estimate a statistic $\mathbf{R}[\cdot]$ that summarizes the random variable distribution, such as an expectation $\mathbf{R}[\cdot]=\mathbb{E}[\cdot]$ or variance $\mathbf{R}[\cdot]=\mathbb{V}ar[\cdot]$.
In the next section, we instead estimate the uncertainty on the objective $J$ to introduce optimization under uncertainty, but focus on estimating $\mathbf{R}[f(x,\xi)]$ in this section.

A common approach to estimating a risk metric on $f(x,\xi)$ is with sampling-based estimators, such as Monte Carlo.
Sampling approaches create $N$ independent and identically distributed (i.i.d.) random variables of the system parameter $\{\xi_1,\xi_2,\cdots,\xi_N\}$ and pass them to the simulation $\{f(x,\xi_1),f(x,\xi_2),\cdots,f(x,\xi_N)\}$.
Given the many model random variables, the statistic can be estimated with an estimator $\mathbf{\hat{R}}$.
For example, the mean estimator becomes $\mathbf{\hat{R}}[f(x,\xi)] = \frac{1}{N}\sum_{i=1}^N f(x,\xi_i).$
Since the estimate $\mathbf{\hat{R}}[f(x,\xi)]$ is a function of $N$ random variables, the estimator itself is a random variable with sample space $\Theta$.
To obtain an evaluation of the statistic, we can sample from $\mathbf{\hat{R}}[f(x,\xi)]$ by sampling the $N$ i.i.d. parameter variables $\xi$ from their distributions, passing them through the expensive simulation $f$, and calculating the estimate for those $N$ samples.

Multi-fidelity methods provide improved estimators that reduce the number of high-fidelity simulations \cite{giles2008multilevel,peherstorfer2016optimal,gorodetsky2020generalized}. Most multi-fidelity estimators~\cite{giles2008multilevel,peherstorfer2016optimal,gorodetsky2020generalized,gorodetsky2024grouped} are built on the concept of control variates ~\cite{nelson1990control}. Furthermore, they can be employed for a wide range of statistics that can include mean, variance, quantiles, sensitivities indices, probabilities of failure, and more~\cite{gorodetsky2020generalized, qian2018multifidelity, ayoul2023quantifying, peherstorfer2016multifidelity}.

As a simple example by which to demonstrate the approach in this paper, we use multilevel Monte Carlo (MLMC). MLMC was introduced by Giles \cite{giles2008multilevel}, and the method linearly combines high- and low-fidelity models to create a multi-fidelity estimator.
With two models, the MLMC estimator becomes
\begin{align}
    \mathbf{\hat{R}}_{\textrm{ML}}(x) = \mathbf{\hat{R}}^N[f(x,\xi)] - (\mathbf{\hat{R}}^N[f_L(x,\xi)] - \mathbf{\hat{R}}^M[f_L(x,\xi)]) \label{eq:MLMC_func}
\end{align}
where $\mathbf{\hat{R}}^N[f_L(x,\xi)]$ represents a low-fidelity risk estimator that uses $N$ samples. 
The last term in the MLMC estimator is the low-fidelity estimator that uses $M>N$ samples, which are independent from the original $N$ samples.
When the low-fidelity model $f_L$ has similar variance and high correlation to the high-fidelity model $f$, an MLMC mean estimator $\mathbf{\hat{R}}_{\textrm{ML}}$ is able to provide a more accurate estimator than an equivalent-cost high-fidelity estimator. In this work, we rely on the sub-optimal MLMC estimator for variance reduction, because more general methods cited above often require known correlations between models. We leave the complexities associated with estimating these correlations to future work.

In the next section, we introduce single-fidelity optimization under uncertainty.

\subsection{Optimization under uncertainty}\label{sec:OoU}

~~~Optimization under uncertainty (OuU) minimizes a risk metric of a random objective function.  In this setting, the objective $J$ is a function of the random simulation $f(x,\xi)$. Therefore, the objective function $J(x,\xi,f(x,\xi)):\mathbb{R}^d \times\Xi\times\Theta\to\Omega$ is also a random variable with sample space $\Omega$. Since minimizing a random variable is ill-defined, OuU minimizes a statistic of the random objective instead, and this statistic is often called a {\it risk metric}. Formally, we consider unconstrained minimization of a risk metric $\mathbf{R}:\Omega\to\mathbb{R}$ that maps the random objective to a scalar
\begin{align}
    \min_x ~\mathbf{R}[J(x,\xi,f(x,\xi))]. \label{eq:true_objective}
\end{align}
As an example, the expected value $\mathbb{E}[J(x,\xi,f(x,\xi))]$ is used for risk-neutral optimization. However many other choices such as a linear combination of expectation and variance, conditional value at risk, or probability-based statistics are possible \cite{roald2023power,menhorn2024multilevel}. 
In this work, we solely consider unconstrained optimization, but random constraints have been investigated with summary statistics or probability-based constraints \cite{tong2022optimization}.

When the risk metric $\mathbf{R}(x)$ can be explicitly evaluated as a function of the decision variables $x$ (without any sampling procedures), Equation \eqref{eq:true_objective} is a deterministic optimization problem that can be solved by many off-the-shelf nonlinear programming procedures. The risk metric, however, cannot often be evaluated explicitly as a function of $x$.  The most common approach to approximating the risk metric is with sampling procedures, such as Monte Carlo, which are inherently noisy. For example, in the risk-neutral case where $\mathbf{R}[\cdot] = \mathbb{E}[\cdot]$, we can estimate the mean with $N$ random samples $\{\xi_1,\cdots,\xi_N\}$ of the uncertain parameter
\begin{align}
    \mathbf{\hat{R}}[J(x,\xi,f(x,\xi))] = \frac{1}{N}\sum_{i=1}^N J(x,\xi_i,f(x,\xi_i))
\end{align}
where $\mathbf{\hat{R}}$ approximates $\mathbf{R}$.
Variances, probabilities of failures, and quantiles can all be estimated using sampling-based estimators for risk-aware and robust optimization objectives \cite{menhorn2024multilevel,chaudhuri2020risk,qiu2015robust}. 
However, since the samples $\{\xi_1,\cdots,\xi_N\}$ are randomly drawn, $\mathbf{\hat{R}}$ is noisy. 
Thus, the estimate $\mathbf{\hat{R}}$ is a random variable, dependent on the samples drawn for estimation.

Optimization over a random objective $\mathbf{\hat{R}}$ is considered under the class of stochastic optimization, which handles any stochastic behavior during the optimization procedure. To generalize, the two most common approaches to solve stochastic optimization under uncertainty problems may be grouped into SA methods \cite{bottou2018optimization,fehrman2020convergence,chen2018stochastic,pasupathy2018sampling,curtis2019stochastic,blanchet2019convergence,byrd2012sample,wang2013variance,kouri2013trust}
and scenario-based optimization (SAA) \cite{fu2015handbook,homem2014stochastic,shapiro1991asymptotic,homem2014stochastic, ermoliev2013sample}. 
SA methods generally take independent samples at every optimization iteration and guarantee convergence to the minimum of the true risk metric \cite{bottou2018optimization,fehrman2020convergence}.
By drawing new samples of $\xi$ at every iteration, SA methods converge to the true solution $x^*=\min \mathbf{R}(x)$, even though estimates $\mathbf{\hat{R}}$ are used at every iteration.
The most common SA procedure is stochastic gradient descent (SGD), which generates an estimate of the gradient of the risk metric with an independent set of samples at each iteration.
Common SA approaches also increase the number of samples per iteration to ensure convergence \cite{ha2025multi, ha2024adaptive, van2021mg, kouri2014multilevel, kouri2013trust, chen2018stochastic, deng2009variable}.

Scenario-based optimization, on the other hand, fixes the sample set of the random variable $\xi$ before optimization, transforming the stochastic problem into a deterministic one. 
This introduces bias, but the solution converges to the true optimum as the sample size increases \cite{shapiro1991asymptotic,homem2014stochastic}.
By fixing the set of samples before optimization, scenario-based optimization converges to the solution $x^*(\xi)=\min \mathbf{\hat{R}}(x;\xi)$ which is a function of the set of random variables drawn. 
Thus, the optimal solution is a random variable whose variance can be decreased as the size of the sample set increases. Removing the stochasticity from the problem enables any off-the-shelf nonlinear programming implementation to work efficiently on optimization under uncertainty. 

In this work, we introduce a hybrid approach that uses SA to draw new samples at every iteration, but relies on SAA to propose steps using a trust-region.
In the next section, we introduce previous works on the union of OuU with multi-fidelity methods.

\section{Related Work}\label{sec:MFOuU}
~Multi-fidelity optimization under uncertainty (MFOuU) methods reduce the cost of optimization by exploiting the relationship between the high- and low-fidelity models. 
There are two existing strategies to exploit a low-fidelity model $f_L(x,\xi)$ in MFOuU methods.
These strategies directly correspond to the two function inputs ($x$ and $\xi$).
The first strategy is to exploit the ``correlation" between $f(x,\cdot)$ and $f_L(x,\cdot)$ with respect to the random variable $\xi$.
This is a measure of how related the functions are across the random space.
The second strategy is to exploit the ``curvature" between $f(\cdot,\xi)$ and $f_L(\cdot,\xi)$ with respect to the decision variable $x$.
This is a measure of how related the functions are across the design space. 
Most existing works can be described by how they exploit the correlation or curvature between fidelities.
Previous works for multi-fidelity OuU can be seen in Table \ref{tab:MFOuU}.
This table summarizes each method by their use of trust-regions, gradients, and their exploitation of correlation and curvature.

\begin{figure*}[h]
\begin{center}
\captionof{table}{
Previous works on multi-fidelity optimization under uncertainty. Methods are compared by their use of trust-regions and gradients, and their exploitation of correlation and curvature.
}
\tempdima=  \dimexpr \textwidth/7 - 2\tabcolsep\relax
\begin{tabular}{|M{3.5cm}||M{1.75cm}|M{1.75cm}||M{1.75cm}|M{1.75cm}|} 
 \hline
 {\textbf{Method}} &
 {\textbf{Trust-region}} & 
 {\textbf{Gradient-based}} &  
 {\textbf{Exploits Correlation}} & 
 {\textbf{Exploits Curvature}} \\ 
 \hline \hline
 Dereich et al. \cite{dereich2019general} (MLMC-SGD) &  & \cmark & \cmark &  \\ \hline
 Frikha \cite{frikha2016multi} &  & \cmark & \cmark &  \\ \hline
 Korondi et al. \cite{korondi2021multi} &   &  &  & \cmark\\ \hline
 Ng and Willcox \cite{ng2014multifidelity} &  &  & \cmark & \\\hline
 Menhorn et al. \cite{menhorn2024multilevel} & \cmark &  &  \cmark & \\\hline
 Agrawal et al. \cite{agrawal2023multi} &  &  &  \cmark &  \\\hline
 De et al. \cite{de2020bi} (BF-SVRG) &  & \cmark &  \cmark &  \\ \hline
 Shah et al. \cite{shah2015multi} &  &  &  \cmark & \cmark \\\hline
 Van Barel et al. \cite{van2021mg} &  & \cmark &  \cmark & \cmark \\\hline
 Ha and Mueller \cite{ha2024adaptive} (ASTRO-BFDF) & \cmark &  &  \cmark & \cmark \\\hline
 Ha and Mueller \cite{ha2025multi} & \cmark &  &  \cmark & \cmark \\\hline
 \textbf{BISTRO} & \cmark & \cmark &  \cmark & \cmark\\
 \hline
\end{tabular}
\label{tab:MFOuU}
\end{center}
\end{figure*}

To exploit curvature in the design space, most methods in Table \ref{tab:MFOuU} first convert the OuU into a deterministic optimization problem using methods like polynomial chaos expansion to calculate risk metrics \cite{korondi2021multi,shah2015multi}.
Once the stochasticity is removed from the problem, multi-fidelity optimization methods, such as those introduced in Section \ref{sec:MFO}, are applied to exploit curvature.
Examples of exploiting curvature include trust-region \cite{ha2025multi} or Bayesian optimization \cite{korondi2021multi} methods that share information in the design space. 

To exploit correlation in the random variable space, most methods in Table \ref{tab:MFOuU} use existing OuU methods (Section \ref{sec:OoU}) with a multi-fidelity sampling-based estimator, typically with MLMC \cite{dereich2019general, frikha2016multi,menhorn2024multilevel,agrawal2023multi} or approximate control variates (ACV) \cite{ng2014multifidelity,de2020bi,ha2025multi}.
Section \ref{sec:UQ} discussed the multi-fidelity estimation methods that are common to exploit the correlation between fidelities, which are then used in methods such as SGD.

To the best of the authors' knowledge, only a few existing methods \cite{shah2015multi,ha2025multi, ha2024adaptive, van2021mg, kouri2014multilevel} exploit both correlation and curvature simultaneously. 
Shah et al. \cite{shah2015multi} develop a global surrogate of the objective over the joint design and random variable space using a fixed set of high-fidelity evaluations. 
They then use a scenario-based optimizer to find a solution. 
Because their surrogate is trained with finite data, their final result remains biased. 
Ha and Mueller \cite{ha2025multi, ha2024adaptive} consider an alternate approach that is more similar to ours. 
They use a stochastic trust-region process with local low-fidelity evaluations to inform the high-fidelity search within a novel dual trust-region strategy. 
However, their approach is a derivative-free method, requiring a batch of design points to approximate the models at every optimization iteration.
Finally, MLMC extensions of the MG/OPT \cite{lewis2000multigrid} optimization algorithm use MLMC to exploit correlation while using MG/OPT to exploit curvature \cite{van2021mg, kouri2014multilevel}.

BISTRO deviates from these previous SA approaches \cite{ha2025multi, ha2024adaptive,van2021mg, kouri2014multilevel} since it does not increase the number of samples drawn per iteration.
Prior works adaptively grow the sample size to maintain high estimator accuracy and satisfy a sufficient decrease condition. 
However, SA methods with variable sample sizes may yield suboptimal convergence rates when measured in terms of total simulation evaluations. 
Achieving optimal efficiency in these adaptive schemes requires carefully tuned, problem-dependent sampling rates \cite{pasupathy2018sampling}.
BISTRO avoids finding the optimal sampling rate by employing SGD, which only uses one high-fidelity simulation per iteration. 
While SGD achieves optimal efficiency given an appropriate, problem-dependent learning rate and decay schedule \cite{bottou2018optimization}, this shifts the challenge from tuning sampling rates to tuning the learning rate schedule.
As with adaptive sampling in previous work, a variety of methods exist to adaptively adjust the learning rate in SGD-based approaches \cite{kingma2014adam, loizou2021stochastic, ge2019step} which will be investigated in BISTRO in future work. 

In the numerical results in Section \ref{sec:results}, we compare against multiple existing methods. Dereich \cite{dereich2019general} uses an MLMC estimator with an SGD optimizer to estimate the high-fidelity gradient (MLMC-SGD), exploiting the correlation between fidelities. We compare against a simplification of \cite{dereich2019general} since we do not adaptively choose the number of fidelities and number of samples per fidelity. De et al. \cite{de2020bi} propose BF-SVRG, an extension to \cite{dereich2019general} that combines ACV estimation with a variance reduction strategy, SVRG. Lastly, we compare against ASTRO-BFDF by Ha and Mueller \cite{ha2024adaptive} which is the derivative-free trust-region approach described above.
We do not compare against Shah et al. \cite{shah2015multi} since it is a SAA approach that does not converge to the underlying optimum or against the MG/OPT methods \cite{van2021mg,kouri2014multilevel} since they are only applicable to PDE-constrained optimization with coarser PDE discretizations as low-fidelity models.

In this work, we propose combining a bi-fidelity SGD method \cite{dereich2019general} with a fixed-trust-region approach to exploit both the curvature and correlation to improve the cost of stochastic optimization.
The results in Section \ref{sec:results} demonstrate that our trust-region procedure allows BISTRO to outperform these existing methods.

\section{BISTRO - Algorithm}\label{sec:method}

We propose a bi-fidelity stochastic approximation method that leverages trust-region optimization. The basic idea of BISTRO is to warm-start a variance-reduced SGD optimizer close to the solution using an initial trust-region phase. The initial trust-region optimizer uses the low-fidelity model to exploit the curvature in $x$.
Once near the solution, the trust-region optimizer terminates and an SGD optimizer with an MLMC gradient estimator begins.
We assume the curvature of the low-fidelity model is not informative near the high-fidelity solution --- assuming otherwise would imply that only the low-fidelity model needs to be used.
As a result, we stop using the trust-region optimizer near the solution because the trust-region suboptimization can be quite expensive if the cost of the low-fidelity model is non-negligible.

The overall algorithm for BISTRO is provided in Algorithm \ref{alg:BISTRO}. 
It begins with using a trust-region optimizer, which is discussed in Section \ref{sec:trust-region}.
The variance-reduced SGD procedure and the criteria for switching is detailed in Section \ref{sec:mlmc-sgd}.

\begin{algorithm}[h]
\caption{BISTRO}\label{alg:BISTRO}
\begin{algorithmic}[1]
\REQUIRE High-fidelity random objective $J_H$, Low-fidelity random objective $J_L$, Initial design $x_0$, Trust-region size $\Delta$, Computational budget $B$, Number of high-fidelity samples per iteration $N$, Number of low-fidelity samples per iteration $M$, Number of low-fidelity samples per trust-region solve $K$, SGD learning rate parameter $\beta$, SGD learning rate parameter $\gamma$, trust-region learning rate $\lambda$

\STATE Begin by using the trust-region optimizer.

\noindent
$\texttt{use\_trust\_region} \gets \texttt{True}$, $k\gets0$, $t\gets0$ 

\STATE Draw new estimators $\mathbf{\hat{R}}_{H}^N,\nabla\mathbf{\hat{R}}_{H}^N$ and $\mathbf{\hat{R}}_L^K, \nabla\mathbf{\hat{R}}_L^K,\mathbf{\hat{R}}_L^M, \nabla\mathbf{\hat{R}}_L^M$ at $x_0$.
\STATE Create the MLMC estimator $\mathbf{\hat{R}}_{ML}^{(0)}(x_0)$ from Equation \eqref{eq:grad_mlmc}.
\WHILE{budget $B$ remains}

\IF{\texttt{use\_trust\_region}}

\STATE Create the corrected low-fidelity function $\mathbf{\hat{R}}_{C}^{(k)}(x_k)$ from Equation \eqref{eq:corr_surr}.
\STATE Find $s_{k}$ by solving the trust-region sub-problem from Equation \eqref{eq:trust_region_optimization}. \label{line:trust_region_proposal}
\begin{align}
    s_{k}  \gets \arg\min_{s_{k}} ~\mathbf{\hat{R}}^{(k)}_C(x_{k}+s_k) \quad\quad\textrm{s.t.}\quad\quad||s_k||\leq\Delta 
\end{align}
\STATE Take the new step $x_{k+1} = x_k + \lambda s_k$
\STATE Draw new estimators $\mathbf{\hat{R}}_{H}^N,\nabla\mathbf{\hat{R}}_{H}^N$ and $\mathbf{\hat{R}}_L^K, \nabla\mathbf{\hat{R}}_L^K,\mathbf{\hat{R}}_L^M, \nabla\mathbf{\hat{R}}_L^M$ at $x_{k+1}$.
\STATE Create the MLMC estimator $\mathbf{\hat{R}}^{(k+1)}_{ML}(x_{k+1})$ from Equation \eqref{eq:grad_mlmc}.

\IF{$\mathbf{\hat{R}}_{ML}^{(k+1)}(x_{k+1})>\mathbf{\hat{R}}^{(k)}_{ML}(x_k)$}\label{line:switch}
\STATE $\texttt{use\_trust\_region} \gets \texttt{False}$
\ENDIF
\ELSE
\STATE Draw new estimators $\nabla\mathbf{\hat{R}}_{H}^N$ and $\nabla\mathbf{\hat{R}}_L^N,\nabla\mathbf{\hat{R}}_L^M$ at $x_k$.
\STATE Create the MLMC estimator $\nabla\mathbf{\hat{R}}^{(k)}_{ML}(x)$ from Equation \eqref{eq:grad_mlmc}.
\STATE Update the learning rate to $\alpha_t \gets \beta/(\gamma + t)$
\STATE Find the MLMC-SGD step according to $x_{k+1} \gets x_k-\alpha_t \nabla\mathbf{\hat{R}}^{(k)}_{ML}(x_k)$.
\STATE Update the learning rate counter $t\gets t+1$
\ENDIF
\STATE $k\gets k+1$
\ENDWHILE
\RETURN Final design $x_k$
\end{algorithmic}
\end{algorithm}

\subsection{Trust-Region Phase}\label{sec:trust-region}

~~BISTRO initially uses a corrected low-fidelity model within a trust-region of fixed size
to propose a new step, similar to the approach described in Section \ref{sec:MFO}.
To merge notation from the previous sections, let $J_H(x,\xi)\equiv J(x,\xi,f(x,\xi))$ and $J_L(x,\xi)\equiv J(x,\xi,f_L(x,\xi))$.
Specifically, 
the trust-region optimizer minimizes $\mathbf{R}_H(x) \equiv \mathbf{R}[J_H(x,\xi)]$,
through a sequence of sub-problems that minimize a corrected low-fidelity risk metric. 
To satisfy the requirements of Equation \eqref{eq:trust_requirements}, 
the corrected function must match the value and derivative 
of $\mathbf{R}_H$ at the center $x_0$ of a trust-region.
The corrected risk metric is
\begin{align}
    \mathbf{R}_{C}(x) &\equiv \mathbf{R}_L(x) + (\mathbf{R}_H(x_0)\!-\!\mathbf{R}_L(x_0)) 
    + (\nabla\mathbf{R}_H(x_0)\!-\!\nabla\mathbf{R}_L(x_0) )^\top(x\!-\!x_0) , \label{eq:true_corrected_function}
\end{align}
where $\mathbf{R}_L  \equiv\mathbf{R}[J_L(x,\xi)]$.
The term $(\mathbf{R}_H(x_0)\!-\!\mathbf{R}_L(x_0))$ corrects the low-fidelity value at $x_0$ while the last term corrects its gradient.
To evaluate the corrected function $\mathbf{R}_{C}$ at a new location $x'$, only the low-fidelity function needs to be evaluated, since the values and gradients of $\mathbf{R}_H$ and $\mathbf{R}_L$ have already been stored at the center $x_0$.
Equivalently, the corrected function can be rewritten as
\begin{align}
    \mathbf{R}_{C}(x) &= \underbrace{\mathbf{R}_H(x_0) + \nabla\mathbf{R}_H(x_0)(x-x_0)}_{\text{Linearized High-fidelity}}
    + \underbrace{\mathbf{R}_L(x) -\!\mathbf{R}_L(x_0) 
    -\!\nabla\mathbf{R}_L(x_0)^\top(x\!-\!x_0)}_{\text{Higher-order Moments of Low-fidelity}} ,\nonumber
\end{align}
as the linearized high-fidelity function around $x_0$ with the higher-order moments from the low-fidelity function.
Thus, $\mathbf{R}_{C}$ approximates $\mathbf{R}_{H}$ near $x_0$ and is able to exploit the curvature information from the low-fidelity.

Since $\mathbf{R}_H$ and $\mathbf{R}_L$ cannot be evaluated exactly in practice, sampling-based estimators are used. Let $\mathbf{\hat{R}}_H^N$ be an $N$-sample estimator of $\mathbf{R}_{H}$ and $\nabla\mathbf{\hat{R}}_{H}^N$ be its gradient.
BISTRO assumes that the gradient of the risk metric estimator can be evaluated.
This, in turn, requires the gradient of $J_H$ and $J_L$ to also be evaluated.
For example, if the risk is an expectation $\mathbf{R}[\cdot]=\mathbb{E}[\cdot]$, then
\begin{align}
    \nabla_x\mathbf{\hat{R}}_{H} = \nabla_x \left(\frac{1}{N}\sum_{i=1}^N J_H(x,\xi)\right) =  \frac{1}{N}\sum_{i=1}^N \nabla_x J_H(x,\xi)
\end{align}
 Thus, $\nabla_x J_H(x,\xi)$ may be found using automatic or numerical differentiation. 
The gradients of more complex risk metrics must first be found before gradient-based stochastic optimization can be applied for those metrics.

We now estimate the corrected low-fidelity function in Equation \eqref{eq:true_corrected_function} using the MC estimators.
For $K,N\in\mathbb{Z}_+$, the corrected function becomes
\begin{align}
    \mathbf{\hat{R}}_{C}^{(k)}(x) &= \mathbf{\hat{R}}_L^K(x)     
    + (\mathbf{\hat{R}}_{H}^N(x_0) - \mathbf{\hat{R}}_L^K(x_0) )
    + (\nabla\mathbf{\hat{R}}_{H}^N(x_0) - \nabla\mathbf{\hat{R}}_L^K(x_0) )^\top(x-x_0) \label{eq:corr_surr}
\end{align}
where the superscript $(k)$ denotes the iteration index of Algorithm \ref{alg:BISTRO} and each $(k)$ is evaluated with a new set of samples. 
We minimize \eqref{eq:corr_surr} over a trust-region, centered at $x_0$, to propose a new step for the optimization procedure.
For the proof of convergence in Section \ref{sec:theo_results}, the set of $N$ and $K$ samples must be independent. 
In practice, however, if the $K$ samples used to evaluate the low-fidelity on $x_0$ are equal to the $N$ high-fidelity samples on $x_0$, correlation can be exploited for variance reduction.
To find the minimum of \eqref{eq:corr_surr}, we use scenario-based optimization and a deterministic black-box optimizer to find a proposal step. 
In summary, the trust-region component in BISTRO finds a step $s_k$
\begin{align}
    s_{k} =\arg\min_{s_k} ~\mathbf{\hat{R}}^{(k)}_C(x_k+s_k) \label{eq:trust_region_optimization}\quad\quad\textrm{s.t.}\quad\quad||s_k||\leq\Delta
\end{align}
for a trust-region size of $\Delta\in\mathbb{R}_+$ centered around the current location $x_k$ for the $k$-th iteration. 
The solution to the minimization $s_k$ is the direction for the $k+1$th step $x_{k+1} = x_k + \lambda s_k$ where $\lambda\in\mathbb{R}_+$ is a learning rate.
This learning rate is necessary to provide convergence guarantees in Section \ref{sec:trust-conv}, but is set to $\lambda=1$ practically when oracle constants are unknown.
Further, the trust-region size is constant throughout the procedure, unlike the adaptive sizing in previous works, since BISTRO does not require the trust-region size to decay for convergence.
The trust-region is included to ensure that assumptions about the low-fidelity model are met (see Section \ref{sec:trust-conv}) and arbitrarily large step sizes are not taken.

\subsection{MLMC-SGD Phase}\label{sec:mlmc-sgd}

After the trust-region phase terminates, BISTRO switches to a multi-fidelity variance-reduced SGD algorithm to exploit the correlation between the fidelities.
Instead of taking an SGD step with the high-fidelity gradient $\nabla\mathbf{\hat{R}}^N_H(x)$, we use the MLMC estimator $\nabla\mathbf{\hat{R}}^{(k)}_{ML}$, which provides a low-variance estimate of the gradient.
This approach is identical to the approach from \cite{dereich2019general} without the adaptive fidelity selection and adaptive sampling rates. While BISTRO may benefit from these features, the analysis and implementation will be left to future work.
To exploit the correlation between $J_H(x,\xi)$ and $J_L(x,\xi)$ with respect to $\xi$, MLMC is used to reduce the variance of the gradient of the estimator $\nabla\mathbf{\hat{R}}_H$ and the risk estimator $\mathbf{\hat{R}}_H$.
Thus, the MLMC estimators are
\begin{align}
    \mathbf{\hat{R}}^{(k)}_{ML}(x) &= \mathbf{\hat{R}}_H^N(x) - (\mathbf{\hat{R}}_L^N(x) - \mathbf{\hat{R}}_L^M(x)) \\
    \nabla\mathbf{\hat{R}}^{(k)}_{ML}(x) &= \nabla\mathbf{\hat{R}}_H^N(x) - (\nabla\mathbf{\hat{R}}_L^N(x) - \nabla\mathbf{\hat{R}}_L^M(x)) \label{eq:grad_mlmc}
\end{align}
for Monte Carlo sample sizes satisfying $M>N$.
The resulting MLMC estimators $\nabla\mathbf{\hat{R}}^{(k)}_{ML}(x)$ and $\mathbf{\hat{R}}^{(k)}_{ML}(x)$ are unbiased with respect to the high-fidelity estimators $\nabla\mathbf{\hat{R}}^N_{H}(x)$ and $\mathbf{\hat{R}}^N_{H}(x)$ under mild regularity assumptions\footnote{
Assuming for all $x\in\mathbb{R}^d$ and  almost all $\xi\in\Xi$: the function $J_H(x,\xi)$ is Lebesgue-integrable with respect to $\xi$, the gradient $\nabla_x J_H(x,\xi)$ exists, and there exists an integrable function $g(\xi):\Xi\to\mathbb{R}^d$ such that $|\nabla_x J_H(x,\xi)|\leq g(\xi)$ \cite{asmussen2007stochastic}.
}.
If the low-fidelity function were to increasingly improve its approximation of the high-fidelity function, the variance of the MLMC estimator disappears.
Thus, the reduced-variance SGD method approaches traditional gradient descent, without any noise. 
Further, we assume that the $N$ samples in the MLMC estimator are the same set of $N$ samples used in trust-region corrected function, but this is not necessary for convergence.
We denote the use of an MLMC estimator with SGD as MLMC-SGD. 

As seen in Algorithm \ref{alg:BISTRO}, the learning rate in MLMC-SGD begins to decay as a function of the number of iterations after the trust-region terminates.
This is common in most SGD approaches and allows MLMC-SGD to converge only when $\alpha_t$ converges to zero. 
The optimal rate at which $\alpha_t$ converges depends on the differentiability, convexity, and noise of the risk metric estimator \cite{bottou2018optimization}.
In practice, the learning rate can be chosen through adaptive schemes or tuned manually.
Finally, BISTRO terminates when the computational budget of the algorithm is reached, but other convergence criteria from the SGD literature may be used.

\subsection{Switching Condition}

The trust-region phase is eventually terminated due to its cost and deteriorating performance.
The trust-region optimizer in Line \ref{line:trust_region_proposal} in Algorithm \ref{alg:BISTRO} is more expensive than an MLMC-SGD step since the corrected function $\mathbf{\hat{R}}^{(k)}_C$ requires many evaluations of $\mathbf{\hat{R}}^K_L$.
The trust-region optimizer may also propose hindering directions if there is no useful curvature information that the low-fidelity model can provide.
Specifically, near the high-fidelity solution, the approximation $\mathbf{\hat{R}}^{(k)}_C$ is not expected to provide useful curvature information.

To determine when to terminate the trust-region optimizer, the \textit{switching condition} is introduced. 
This condition estimates the proximity to the high-fidelity solution and is motivated in Section \ref{sec:motivation}.
If the switching condition is met on Line \ref{line:switch} in Algorithm \ref{alg:BISTRO}, BISTRO asymptotically relies solely on MLMC-SGD to converge. 
Let the superscript $(k)$ denote independent samples from $(k+1)$. When $\mathbf{\hat{R}}^{(k+1)}_{ML}(x_{k+1}) \geq \mathbf{\hat{R}}^{(k)}_{ML}(x_{k})$, BISTRO stops using the trust-region optimizer and begins using MLMC-SGD only.
This condition could be met due to the noise of the estimator $\mathbb{V}ar[\mathbf{\hat{R}}^{(k)}_{ML}]$ being non-negligible or if the trust-region proposes a poor direction $\mathbf{{R}}_H(x_{k+1}) \geq \mathbf{{R}}_H(x_{k})$.
In either case, the trust-region should be terminated so the MLMC-SGD can exploit the correlation for variance reduction.
See Section \ref{sec:motivation} for a theoretical motivation of this condition, and Section \ref{sec:quadratic_problem} for its empirical analysis. 

\section{Theoretical Results}\label{sec:theo_results}
We provide guarantees that BISTRO in Algorithm \ref{alg:BISTRO} converges to a stationary point in expectation under regularity assumptions, such as continuous differentiability and bounded estimator noise. 
Since BISTRO is a two-phase algorithm (the trust-region phase and the MLMC-SGD phase), the convergence proof relies on two main arguments.
First, we provide the trust-phase convergence guarantee under regularity assumptions to a basin near the stationary point.
We then show that MLMC-SGD asymptotically converges to the stationary point.
By guaranteeing that the trust-phase will terminate and MLMC-SGD will begin, we guarantee BISTRO convergence to a stationary point.

We begin by demonstrating the trust-phase convergence around a stationary point in Section \ref{sec:trust-conv}.
We then provide details of the asymptotic MLMC-SGD convergence to a stationary point in Section \ref{sec:SGD_MLMC}.
Finally, we provide motivation for the switching condition in Section \ref{sec:motivation}.

\subsection{Trust-Region Convergence}\label{sec:trust-conv}
We provide convergence guarantees in expectation to an area near a stationary point for the trust-region phase.
Our strategy is to show that the trust-region phase is similar to a stochastic Newton's method and prove convergence in a similar manner.
Convergence of any stochastic descent algorithm has been proven with Theorem 4.8 in Bottou et al. \cite{bottou2018optimization} under regularity assumptions.
We follow a similar outline and make the following assumptions about continuous differentiability, low-fidelity convexity, and estimator regularity.

\begin{defin}
    \label{defin:convex}
    A function $f:\mathbb{R}^d\to\mathbb{R}$ is strongly convex if there exists a constant $c>0$ such that for all $x,x'\in\mathbb{R^d}$
    \begin{align}
        f(x) \geq f(x') + \nabla f(x')^T(x-x') + \frac{c}{2}|| x-x'||^2_2.
    \end{align}
\end{defin}

\begin{defin}
    \label{defin:convex}
    A function $f:\mathbb{R}^d\to\mathbb{R}$ has a Lipschitz continuous gradient $\nabla f$ if there exists a constant $L>0$ such that for all $x,x'\in\mathbb{R^d}$
    \begin{align}
        ||\nabla f(x) - \nabla f(x')||_2 \leq L||x-x'||_2.
    \end{align}
\end{defin}

\begin{assumption}
    \label{ass:hf_smooth_convex}
    $\nabla_x \mathbf{R}_H$ is Lipschitz continuous with constant $L_H$.
\end{assumption}

\begin{assumption}
    \label{ass:lf_smooth_convex}
    The estimator $\mathbf{\hat{R}}^{M,(k)}_L$ is strongly convex with convexity constant $c_L$ and $\nabla \mathbf{\hat{R}}^{M,(k)}_L$ is Lipschitz continuous with constant $L_L$ for all $k$.
\end{assumption}

\begin{assumption}
    $\mathbf{R}_H(x)$ is bounded below for all $x\in\mathbb{R}^d$ and its infimum is $\mathbf{R}_\star=\inf_x \mathbf{R}_H(x)$.
    \label{ass:obj_bound}
\end{assumption}

\begin{assumption}
    \label{ass:var_bound}
    Let $W\geq 0$ and $W_V\geq 0$ such that for a gradient of an estimator $\nabla\mathbf{\hat{R}}^N_H$ and all $x\in\mathbb{R}^d$
    \begin{align}
     Tr(\mathbb{V}ar[\nabla\mathbf{\hat{R}}^N_H(x)])= \mathbb{E}[||\nabla\mathbf{\hat{R}}^N_H(x)||_2^2] - ||\mathbb{E}[\nabla\mathbf{\hat{R}}^N_H(x)]||_2^2 \leq W + W_V ||\nabla\mathbf{R}_H(x)||_2^2.\nonumber
    \end{align}
\end{assumption}

Assuming continuous differentiability and the risk metric lower-bound are standard for general optimization convergence results. 
Further, assuming the noise of the gradient estimator is bounded is also common for SGD results \cite{bottou2018optimization,curtis2019stochastic}. 
A more restrictive assumption is strong convexity for every low-fidelity risk estimate. 
For scenario-based optimization, where only one realization of an estimator is used for the entire optimization procedure, this assumption is required for convergence.
While the trust-region bound $\Delta$ is assumed arbitrarily large in the convergence proofs below, they are necessary in a practical setting where the strong convexity of the low-fidelity risk estimates cannot be ensured.
By including the trust-region size constraint, the proposal step may still be informative near the trust-region center, even when the low-fidelity is non-convex.
We now provide convergence guarantees to show that the trust-phase converges near a stationary point where the asymptotic average gradient squared norm is bounded.

\begin{theorem}[Trust-Phase Converges]
    Using Assumptions \ref{ass:hf_smooth_convex}-\ref{ass:var_bound}, let $x_{k+1}=x_k+\lambda \arg\min_{s_k} \mathbf{\hat{R}}^{(k)}_C(x_k+s_k)$ with learning rate $\lambda=c_L^2/(L_L L_H (W_V+1))$. The average squared gradient norm is asymptotically bounded by  
    \begin{align}
        \lim_{P\to\infty} \mathbb{E}\left[\frac{1}{P}\sum_{k=1}^P||\nabla\mathbf{R}_H(x_k)||^2\right] \leq \frac{W}{W_V+1}.
    \end{align}
\end{theorem}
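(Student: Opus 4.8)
The plan is to mirror the classical convergence analysis for stochastic descent methods (Theorem~4.8 in Bottou et al.) by treating the trust-region proposal $s_k = \arg\min_{s}\mathbf{\hat{R}}^{(k)}_C(x_k+s)$ as an approximate stochastic (quasi-)Newton step on $\mathbf{R}_H$. First I would extract a descent-direction inequality from the optimality of $s_k$. Since $\mathbf{\hat{R}}^{(k)}_C$ is strongly convex with constant $c_L$ (it differs from $\mathbf{\hat{R}}^{M,(k)}_L$ only by an affine term, so Assumption~\ref{ass:lf_smooth_convex} transfers), its unconstrained minimizer over $s$ satisfies $\nabla_s\mathbf{\hat{R}}^{(k)}_C(x_k+s_k)=0$ when the minimizer lies inside the trust region; because $\Delta$ is taken arbitrarily large in the proof, this holds. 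Writing out $\nabla\mathbf{\hat{R}}^{(k)}_C(x_k+s_k)=\nabla\mathbf{\hat{R}}_L^K(x_k+s_k)+\nabla\mathbf{\hat{R}}_H^N(x_k)-\nabla\mathbf{\hat{R}}_L^K(x_k)=0$ and using $L_L$-Lipschitzness and $c_L$-strong convexity of $\nabla\mathbf{\hat{R}}_L^K$, I would bound $\|s_k\|$ above and below in terms of $\|\nabla\mathbf{\hat{R}}_H^N(x_k)\|$, obtaining $\frac{1}{L_L}\|\nabla\mathbf{\hat{R}}_H^N(x_k)\|\le\|s_k\|\le\frac{1}{c_L}\|\nabla\mathbf{\hat{R}}_H^N(x_k)\|$ and the angle condition $s_k^\top\nabla\mathbf{\hat{R}}_H^N(x_k)\le-\frac{c_L}{L_L^2}\|\nabla\mathbf{\hat{R}}_H^N(x_k)\|^2$ (up to the precise constant bookkeeping).

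Next I would invoke the $L_H$-Lipschitz-gradient descent lemma on $\mathbf{R}_H$: with $x_{k+1}=x_k+\lambda s_k$,
\begin{align}
\mathbf{R}_H(x_{k+1}) \le \mathbf{R}_H(x_k) + \lambda\,\nabla\mathbf{R}_H(x_k)^\top s_k + \tfrac{L_H\lambda^2}{2}\|s_k\|^2. \nonumber
\end{align}
Then I would take the conditional expectation over the fresh samples at iteration $k$. The key point is that the $N$ high-fidelity samples (hence $\nabla\mathbf{\hat{R}}_H^N(x_k)$) are drawn independently of the $K$ low-fidelity samples used to build $\mathbf{\hat{R}}^{(k)}_C$ and of the previous history, so $\mathbb{E}[\nabla\mathbf{\hat{R}}_H^N(x_k)]=\nabla\mathbf{R}_H(x_k)$ and the cross term $\mathbb{E}[\nabla\mathbf{R}_H(x_k)^\top s_k]$ can be lower-bounded using the angle condition together with $\mathbb{E}[\|\nabla\mathbf{\hat{R}}_H^N(x_k)\|^2]=\|\nabla\mathbf{R}_H(x_k)\|^2+\mathrm{Tr}(\mathbb{V}\mathrm{ar}[\nabla\mathbf{\hat{R}}_H^N(x_k)])$. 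The $\|s_k\|^2$ term is controlled by $\frac{1}{c_L^2}\mathbb{E}[\|\nabla\mathbf{\hat{R}}_H^N(x_k)\|^2]$. Feeding in the variance bound (Assumption~\ref{ass:var_bound}) $\mathrm{Tr}(\mathbb{V}\mathrm{ar}[\nabla\mathbf{\hat{R}}_H^N])\le W+W_V\|\nabla\mathbf{R}_H(x_k)\|^2$ yields, after choosing $\lambda=c_L^2/(L_L L_H(W_V+1))$ to make the quadratic-in-$\lambda$ coefficient cancel favorably, a recursion of the form
\begin{align}
\mathbb{E}[\mathbf{R}_H(x_{k+1})\mid \mathcal{F}_k] \le \mathbf{R}_H(x_k) - a\,\|\nabla\mathbf{R}_H(x_k)\|^2 + b, \nonumber
\end{align}
with constants $a,b>0$ where $b/a = W/(W_V+1)$.

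Finally I would take total expectation, sum over $k=1,\dots,P$, telescope the left side, and use the lower bound $\mathbf{R}_H(x)\ge\mathbf{R}_\star$ (Assumption~\ref{ass:obj_bound}) to get $a\sum_{k=1}^P\mathbb{E}\|\nabla\mathbf{R}_H(x_k)\|^2 \le \mathbf{R}_H(x_1)-\mathbf{R}_\star + Pb$; dividing by $aP$ and letting $P\to\infty$ gives the claimed bound $W/(W_V+1)$. The main obstacle I anticipate is the careful constant bookkeeping in step one — extracting the sharp angle and norm bounds on $s_k$ purely from first-order optimality of a strongly convex, Lipschitz-gradient function, and then matching those constants against the specific choice of $\lambda$ so that the residual quadratic term is absorbed; a secondary subtlety is justifying that the unconstrained minimizer is interior (so the trust-region constraint is inactive) uniformly, which is exactly where the "arbitrarily large $\Delta$" hypothesis in the theorem is doing its work, and where one must be careful that strong convexity is of the \emph{estimator} $\mathbf{\hat{R}}_L^{K}$ rather than merely the true $\mathbf{R}_L$.
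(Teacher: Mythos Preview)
Your overall scaffold---descent lemma, conditional expectation, telescoping, and the lower bound on $\mathbf{R}_H$---matches the paper's proof exactly, and your observations about the affine correction preserving strong convexity and about the trust-region constraint being inactive for large $\Delta$ are both right and used in the paper. The gap is in your treatment of the cross term $\mathbb{E}[\nabla\mathbf{R}_H(x_k)^\top s_k]$.

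Your angle condition $s_k^\top\nabla\mathbf{\hat{R}}_H^N(x_k)\le -\tfrac{c_L}{L_L^2}\|\nabla\mathbf{\hat{R}}_H^N(x_k)\|^2$ bounds the inner product of $s_k$ with the \emph{estimator} $\hat g:=\nabla\mathbf{\hat{R}}_H^N(x_k)$, but the descent lemma needs $\mathbb{E}[g^\top s_k]$ with the \emph{true} gradient $g:=\nabla\mathbf{R}_H(x_k)$. Since $s_k$ is a function of $\hat g$, one has $\mathbb{E}[\hat g^\top s_k]\neq g^\top\mathbb{E}[s_k]$, and the second-moment identity you cite does not bridge this; any attempt to split $g^\top s_k=\hat g^\top s_k+(g-\hat g)^\top s_k$ and bound the error term introduces an additional positive variance contribution in the linear-in-$\lambda$ part of the recursion, which spoils the exact ratio $b/a=W/(W_V+1)$.

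The paper sidesteps this by writing the step explicitly as a stochastic quasi-Newton step: from the optimality condition and the fundamental theorem of calculus it sets $B_k:=\bigl(\int_0^1\nabla^2\mathbf{\hat{R}}_L^K(x_k+ts_k)\,dt\bigr)^{-1}$, so that $s_k=-B_k\hat g$ with eigenvalues of $B_k$ in $[1/L_L,1/c_L]$. The crucial point is that $B_k$ depends only on the $K$ low-fidelity samples, which are drawn independently of the $N$ high-fidelity samples in $\hat g$. This independence gives $\mathbb{E}[g^\top s_k]=-g^\top\mathbb{E}[B_k]\,\mathbb{E}[\hat g]=-g^\top\mathbb{E}[B_k]g\le -\tfrac{1}{L_L}\|g\|^2$ with \emph{no} variance term in the linear part; all the variance lands in the quadratic term $\mathbb{E}[\|s_k\|^2]\le\tfrac{1}{c_L^2}\mathbb{E}[\|\hat g\|^2]$. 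That separation is precisely what makes the specific $\lambda=c_L^2/(L_LL_H(W_V+1))$ deliver $b/a=W/(W_V+1)$. Your norm and angle inequalities are correct consequences of this $B_k$ structure, but they are too coarse on their own to recover the cross term with the right constants---you need to retain the matrix representation and invoke the independence of the two sample sets explicitly.
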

\begin{proof}
    Proof in Appendix \ref{app:trust_conv}.
\end{proof}

Thus, the asymptotic proximity to the stationary point depends on the bound of the estimator noise $W$ and how fast it grows away from the stationary point $W_V$.
As the number of high-fidelity samples increases $N\to\infty$, the asymptotic norm of the gradient vanishes $W\to0$. 
Further, the learning rate $\lambda$ depends on the high- and low-fidelity continuous differentiability, as well as the low-fidelity convexity.
From Assumption \ref{ass:lf_smooth_convex}, the convexity constant $c_L$ and continuous differentiability constant $L_L$ bound all realizations of low-fidelity risk estimators. Thus, as the low-fidelity noise grows, the convexity bound
$c_L$ shrinks and the differentiability bound $L_L$ grows.
Further, as the low-fidelity noise increases, the learning rate decreases and it takes more iterations to converge to a stationary point.
Conversely, as the low-fidelity estimator noise vanishes $K\to\infty$, the convexity and continuous differentiability constants converge to the true bounds of the low-fidelity risk metric.
Thus, increasing the number of low-fidelity samples $K$ only helps to converge faster to the stationary point (by increasing the learning rate) if the bounds $c_L$ and $L_L$ are smaller and larger respectively than the bounds on the true low-fidelity risk metric.

The trust-phase can also be shown to converge to the stationary point (similar to MLMC-SGD in Section \ref{sec:SGD_MLMC}) if the learning rate decays according to a specific rate, but this is not necessary to show BISTRO convergence.
Further, we provide convergence guarantees to the global minimum under a strong convexity assumption.
We make an additional assumption about the high-fidelity risk metric that is not necessary for BISTRO convergence to a stationary point, but does provide further insight into our method.
\begin{assumption}
    \label{ass:hf_convex}
    $\mathbf{R}_H$ is strongly convex with convexity constant $c_H$.
\end{assumption}

With this strict assumption, we show that the trust-phase converges near the global minimum. 

\begin{theorem}[Trust-Phase converges with convexity]
    Using Assumptions \ref{ass:hf_smooth_convex}-\ref{ass:var_bound} and \ref{ass:hf_convex}, let $x_{k+1}=x_k+\lambda \arg\min_{s_k} \mathbf{\hat{R}}^{(k)}_C(x_k+s_k)$ with learning rate set to $\lambda=c_L^2/(L_L L_H (W_V+1))$. The expected optimality gap is asymptotically bounded by 
    \begin{align}
    \lim_{k\to\infty} \mathbb{E}[\mathbf{R}_H(x_k)-\mathbf{R}_{\star}] \leq \frac{W}{2W_Gc}
\end{align}
\end{theorem}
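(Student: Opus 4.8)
The plan is to first reproduce the one-step progress inequality that already underlies the previous theorem (which does not use Assumption~\ref{ass:hf_convex}), and then close the argument by feeding in strong convexity. Concretely, since $\hat{\mathbf R}^{(k)}_C$ differs from the low-fidelity estimator only by an affine term, Assumption~\ref{ass:lf_smooth_convex} gives that $\hat{\mathbf R}^{(k)}_C$ is $c_L$-strongly convex with $L_L$-Lipschitz gradient, and by construction $\nabla\hat{\mathbf R}^{(k)}_C(x_k)=\nabla\hat{\mathbf R}^N_H(x_k)$; taking $\Delta$ large enough that the trust-region constraint is inactive, the step $s_k$ is the unconstrained minimizer, so it satisfies the Newton-type bounds $\nabla\hat{\mathbf R}^N_H(x_k)^\top s_k\le -c_L\|s_k\|^2$ and $\|\nabla\hat{\mathbf R}^N_H(x_k)\|/L_L\le\|s_k\|\le\|\nabla\hat{\mathbf R}^N_H(x_k)\|/c_L$. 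Applying the descent lemma for the $L_H$-smooth function $\mathbf R_H$ (Assumption~\ref{ass:hf_smooth_convex}) to $x_{k+1}=x_k+\lambda s_k$, splitting $\nabla\mathbf R_H(x_k)^\top s_k=\nabla\hat{\mathbf R}^N_H(x_k)^\top s_k+(\nabla\mathbf R_H(x_k)-\nabla\hat{\mathbf R}^N_H(x_k))^\top s_k$, controlling the error term with Cauchy--Schwarz and Young's inequality, and then taking the expectation conditioned on $x_k$ while using unbiasedness and the variance bound of Assumption~\ref{ass:var_bound}, I expect to recover a recursion of the form
\[
\mathbb E\!\left[\mathbf R_H(x_{k+1})\mid x_k\right]\;\le\;\mathbf R_H(x_k)-A\,\|\nabla\mathbf R_H(x_k)\|^2+B,
\]
where $A>0$ and $B\ge0$ are exactly the constants that the choice $\lambda=c_L^2/(L_L L_H(W_V+1))$ produces, and in particular $B/A=W/(W_V+1)$ --- this ratio is precisely what the averaged bound in the previous theorem exposes.

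Next I would invoke Assumption~\ref{ass:hf_convex}: strong convexity of $\mathbf R_H$ with constant $c_H$ implies the Polyak--{\L}ojasiewicz inequality $\tfrac12\|\nabla\mathbf R_H(x)\|^2\ge c_H(\mathbf R_H(x)-\mathbf R_\star)$ for all $x$ (obtained by evaluating the strong-convexity inequality at the minimizer $x^\star$ and minimizing the linear-plus-quadratic lower bound). Substituting this into the recursion, subtracting $\mathbf R_\star$, and taking total expectations yields a scalar linear recursion for $e_k:=\mathbb E[\mathbf R_H(x_k)-\mathbf R_\star]\ge0$,
\[
e_{k+1}\;\le\;(1-2A c_H)\,e_k+B.
\]
I would then check that the stated learning rate makes the contraction factor $\rho:=1-2Ac_H$ lie in $(0,1)$; $\rho<1$ is immediate, and $\rho>0$ follows from $c_H\le L_H$ together with the smoothness/convexity constants entering $A$, analogously to the stepsize restriction in Bottou et al.\ \cite{bottou2018optimization}.

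Finally, unrolling $e_{k+1}\le\rho e_k+B$ gives $e_k\le\rho^k e_0+B\sum_{j=0}^{k-1}\rho^j\le\rho^k e_0+\tfrac{B}{1-\rho}$, and since $\rho\in(0,1)$ the transient term vanishes, so
\[
\limsup_{k\to\infty}\mathbb E[\mathbf R_H(x_k)-\mathbf R_\star]\;\le\;\frac{B}{1-\rho}\;=\;\frac{B}{2Ac_H}\;=\;\frac{1}{2c_H}\cdot\frac{B}{A}\;=\;\frac{W}{2c_H(W_V+1)},
\]
which is the claimed bound with $c=c_H$ and $W_G=W_V+1$.

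The step I expect to be the real obstacle is re-deriving the one-step recursion cleanly. The proposal step $s_k$ is a nonlinear (argmin) functional of the random samples drawn at iteration $k$, so the Newton-type inequalities must be used pathwise while unbiasedness and the variance bound are applied only after conditioning, and one must carefully track how the constants $c_L,L_L,L_H,W_V$ combine so that the chosen $\lambda$ both keeps the quadratic term dominated (so $A>0$) and produces precisely the ratio $B/A=W/(W_V+1)$. Secondary care is needed to confirm $\rho\in(0,1)$ so that one obtains a genuine limit rather than merely a Ces\`aro-averaged bound, and to handle the filtration bookkeeping when iterating the conditional inequality up to the unconditional statement about $\lim_k\mathbb E[\mathbf R_H(x_k)-\mathbf R_\star]$.
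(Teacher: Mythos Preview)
Your overall strategy---reuse the one-step progress inequality from the non-convex theorem, feed in the PL inequality $\|\nabla\mathbf R_H(x_k)\|^2\ge 2c_H(\mathbf R_H(x_k)-\mathbf R_\star)$ from Assumption~\ref{ass:hf_convex}, obtain a scalar recursion $e_{k+1}\le\rho e_k+B$ with $\rho\in(0,1)$, and pass to the limit---is exactly what the paper does. The paper literally starts from the inequality $\Delta_{k+1}\le\Delta_k+(-\lambda/L_L+L_H\lambda^2 W_G/(2c_L^2))\|\nabla\mathbf R_H(x_k)\|^2+L_H\lambda^2 W/(2c_L^2)$ already derived in the preceding appendix, substitutes the stated $\lambda$, applies PL, subtracts the fixed point $W/(2W_Gc_H)$ from both sides, checks $0<1-c_Hc_L^2/(L_HL_L^2W_G)<1$, and concludes by recursion.

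The one place your sketch diverges is in how the one-step recursion itself is obtained. You propose to split $\nabla\mathbf R_H(x_k)^\top s_k$ into the sampled-gradient part plus a noise term and bound the latter pathwise with Cauchy--Schwarz and Young. The paper instead writes $s_k=-B_k\nabla\hat{\mathbf R}_H^N(x_k)$ with $B_k=\bigl(\int_0^1\nabla^2\hat{\mathbf R}_L(x_k+ts_k)\,dt\bigr)^{-1}$, invokes independence of $B_k$ (built from the separate low-fidelity sample set) and $\nabla\hat{\mathbf R}_H^N(x_k)$ given $x_k$, and factors the expectation directly: $\mathbb E[\nabla\mathbf R_H(x_k)^\top B_k\nabla\hat{\mathbf R}_H^N(x_k)]=\nabla\mathbf R_H(x_k)^\top\mathbb E[B_k]\nabla\mathbf R_H(x_k)\ge L_L^{-1}\|\nabla\mathbf R_H(x_k)\|^2$. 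That factorization is what delivers the clean constants $A=c_L^2/(2L_HL_L^2W_G)$ and $B=c_L^2W/(2L_HL_L^2W_G^2)$, and hence $B/A=W/W_G$, without an auxiliary Young parameter. Your route is valid but, as you anticipated in your last paragraph, matching that exact ratio would require tuning the Young parameter rather than getting it for free.
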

\begin{proof}
    Proof in Appendix \ref{app:trust_convex}.
\end{proof}

Again, if we decay the learning rate $\lambda$ to a specified rate, we are able to converge exactly to the global minimum. 
For BISTRO, we only need to converge near the basin of the solution (or stationary point) and let MLMC-SGD take over asymptotically.

We now show that the switching condition $\mathbf{\hat{R}}_{ML}^{(k+1)}(x_{k+1})>\mathbf{\hat{R}}_{ML}^{(k)}(x_{k})$ 
on Line \ref{line:switch} is met with probability one. 
Assume that near the stationary point, there exists some nonzero probability $p$ that $\mathbf{\hat{R}}_{ML}^{(k+1)}(x_{k+1})>\mathbf{\hat{R}}_{ML}^{(k)}(x_{k})$.
This is a mild assumption that practically states that the high-fidelity objective function is non-degenerate. 
Thus, the probability that the switching condition is not met in $P$ steps near the stationary point is $(1-p)^P$. Thus, since $p>0$, as $P\to\infty$, the probability of the switching condition being met asymptotically is one. 

The next section provides asymptotic convergence guarantees to a stationary point for MLMC-SGD.

\subsection{MLMC-SGD Convergence}\label{sec:SGD_MLMC}
~~We provide convergence guarantees and rates of convergence for MLMC-SGD to a stationary point and convergence rates to the global mininum under strong convexity. 
Previous works have provided convergence guarantees for MLMC-SGD \cite{dereich2019general}, but we provide guarantees from a stochastic descent perspective with a direct comparison to single-fidelity SGD.
Further, we motivate the use of MLMC-SGD by providing auxiliary conditions that guarantee MLMC-SGD converges faster than SGD asymptotically.

We now make a similar assumption to Assumption \ref{ass:var_bound} that bounds the noise of the MLMC gradient estimator.

\begin{assumption}
    \label{ass:var_bound_ml}
    Let $W_{ML}\geq 0$ and $W_{V,ML}\geq 0$ such that for all $x\in\mathbb{R}^d$ and all iterations $k$, a gradient of an estimator $\nabla\mathbf{\hat{R}}^{(k)}_{ML}$ is bounded by  
    \begin{align}
    \mathbb{E}[||\nabla\mathbf{\hat{R}}^{(k)}_{ML}(x)||_2^2] - ||\mathbb{E}[\nabla\mathbf{\hat{R}}^{(k)}_{ML}(x)]||_2^2 \leq W_{ML} + W_{V,ML} ||\nabla\mathbf{R}_H(x)||_2^2.
    \end{align}
\end{assumption}

Under this assumption we obtain the following result.
\begin{theorem}[MLMC-SGD convergence]
    Using Assumptions \ref{ass:hf_smooth_convex}, \ref{ass:obj_bound}, and \ref{ass:var_bound_ml}, let the step $x_{k+1}=x_k-\alpha_k \nabla \mathbf{\hat{R}}_{ML}^{(k)}(x_k)$ with learning rate satisfying
    \begin{align}
        \sum_{k=1}^\infty \alpha_k=\infty \quad\textrm{and}\quad \sum_{k=1}^\infty \alpha_k^2<\infty
    \end{align}
    where $\alpha_k\leq1/(L_H(W_{V,ML}+1))$ for all $k$. The asymptotic expected squared gradient norm follows
    \begin{align}
        \liminf_{k\to\infty} ~\mathbb{E}[||\nabla\mathbf{R}_H(x_k)||^2] = 0.
    \end{align}
\end{theorem}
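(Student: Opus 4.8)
The plan is to mirror the classical non-convex stochastic-gradient analysis (in the spirit of Theorem 4.9 of Bottou et al.\ \cite{bottou2018optimization}), with the MLMC estimator $\nabla\mathbf{\hat{R}}^{(k)}_{ML}$ playing the role of the stochastic gradient. The two facts that make this work are (i) the MLMC estimator is conditionally unbiased for the \emph{true} high-fidelity gradient $\nabla\mathbf{R}_H$, not merely for the Monte Carlo estimator $\nabla\mathbf{\hat{R}}^N_H$, and (ii) Assumption \ref{ass:var_bound_ml} controls its conditional second moment.

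First I would fix the filtration $\mathcal{F}_k$ generated by all random samples drawn through iteration $k$, so that $x_k$ is $\mathcal{F}_{k-1}$-measurable and the sample batches used at iteration $k$ are independent of $\mathcal{F}_{k-1}$. Because each Monte Carlo term is an unbiased average and the MLMC combination is a telescoping control-variate sum, the regularity conditions in the footnote (differentiation under the integral sign) give $\mathbb{E}[\nabla\mathbf{\hat{R}}^{(k)}_{ML}(x_k)\mid\mathcal{F}_{k-1}]=\nabla\mathbf{R}_H(x_k)$. Applying the descent lemma implied by Lipschitz continuity of $\nabla\mathbf{R}_H$ (Assumption \ref{ass:hf_smooth_convex}) to the update $x_{k+1}-x_k=-\alpha_k\nabla\mathbf{\hat{R}}^{(k)}_{ML}(x_k)$ and taking conditional expectation yields
\[
\mathbb{E}[\mathbf{R}_H(x_{k+1})\mid\mathcal{F}_{k-1}]\le \mathbf{R}_H(x_k)-\alpha_k\|\nabla\mathbf{R}_H(x_k)\|^2+\tfrac{L_H\alpha_k^2}{2}\,\mathbb{E}\!\left[\|\nabla\mathbf{\hat{R}}^{(k)}_{ML}(x_k)\|^2\,\middle|\,\mathcal{F}_{k-1}\right].
\]

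Next I would split the conditional second moment into (trace of) conditional covariance plus squared conditional mean and apply Assumption \ref{ass:var_bound_ml}, which holds pointwise and therefore at the random point $x_k$ by independence of the fresh batches from $\mathcal{F}_{k-1}$; this bounds the last term by $W_{ML}+(W_{V,ML}+1)\|\nabla\mathbf{R}_H(x_k)\|^2$. Using the step-size cap $\alpha_k\le 1/(L_H(W_{V,ML}+1))$ absorbs the gradient-dependent piece into half of the $-\alpha_k\|\nabla\mathbf{R}_H(x_k)\|^2$ term, giving
\[
\mathbb{E}[\mathbf{R}_H(x_{k+1})\mid\mathcal{F}_{k-1}]\le \mathbf{R}_H(x_k)-\tfrac{\alpha_k}{2}\|\nabla\mathbf{R}_H(x_k)\|^2+\tfrac{L_H W_{ML}}{2}\alpha_k^2 .
\]
Taking total expectations, summing for $k=1,\dots,K$, and using the lower bound $\mathbf{R}_H\ge\mathbf{R}_\star$ (Assumption \ref{ass:obj_bound}) gives $\tfrac12\sum_{k=1}^K\alpha_k\,\mathbb{E}[\|\nabla\mathbf{R}_H(x_k)\|^2]\le \mathbf{R}_H(x_1)-\mathbf{R}_\star+\tfrac{L_H W_{ML}}{2}\sum_{k=1}^K\alpha_k^2$. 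Letting $K\to\infty$, the right-hand side is finite because $\sum\alpha_k^2<\infty$, so $\sum_{k=1}^\infty\alpha_k\,\mathbb{E}[\|\nabla\mathbf{R}_H(x_k)\|^2]<\infty$; combined with $\sum\alpha_k=\infty$, the standard argument (if $\mathbb{E}[\|\nabla\mathbf{R}_H(x_k)\|^2]\ge\epsilon>0$ for all large $k$ the series would diverge) forces $\liminf_{k\to\infty}\mathbb{E}[\|\nabla\mathbf{R}_H(x_k)\|^2]=0$.

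The only step requiring genuine care, rather than bookkeeping, is the conditional unbiasedness $\mathbb{E}[\nabla\mathbf{\hat{R}}^{(k)}_{ML}(x_k)\mid\mathcal{F}_{k-1}]=\nabla\mathbf{R}_H(x_k)$: it needs the dominated-differentiation hypotheses of the footnote applied to both fidelities together with the independence of the per-iteration, per-fidelity sample batches, and it is also what lets Assumption \ref{ass:var_bound_ml} (stated for deterministic $x$) transfer to the random iterate. Everything downstream is the textbook telescoping supermartingale-type estimate, so I anticipate no further difficulty.
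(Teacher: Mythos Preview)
Your proposal is correct and takes essentially the same approach as the paper: the paper simply verifies that $\mathbb{E}[\nabla\mathbf{\hat{R}}^{(k)}_{ML}(x_k)]=\nabla\mathbf{R}_H(x_k)$ and then invokes Theorem~4.9 of Bottou et al.\ \cite{bottou2018optimization} as a black box, whereas you unpack that theorem's proof (descent lemma, second-moment split via Assumption~\ref{ass:var_bound_ml}, step-size cap, telescoping, Robbins--Monro series argument). There is no substantive difference in strategy.
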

\begin{proof}
    The proof follows from an application of Theorem 4.9 in Bottou et al. \cite{bottou2018optimization} when the MLMC gradient estimator is unbiased $\mathbb{E}[\nabla \mathbf{\hat{R}}_{ML}^{(k)}(x_k)]=\nabla \mathbf{{R}}_{H}(x_k)$.
    Thus, by taking the expectation of the MLMC gradient estimator
    \begin{align}
        \mathbb{E}[\nabla\mathbf{\hat{R}}_{ML}(x)] &= \mathbb{E}[\nabla\mathbf{\hat{R}}^N_H(x)] - \mathbb{E}[\nabla\mathbf{\hat{R}}^N_L(x) - \nabla\mathbf{\hat{R}}^M_L(x)] \\
        &= \mathbb{E}[\nabla\mathbf{\hat{R}}^N_H(x)] = \nabla\mathbf{R}_H(x) \nonumber
    \end{align}
    we see that $\nabla\mathbf{\hat{R}}_{ML}(x)$ is an unbiased estimator.
\end{proof}

Therefore, when $\beta,\gamma$ in Algorithm \ref{alg:BISTRO} are sufficiently chosen to satisfy $0\leq\alpha_t\leq1/(L_H(W_{V,ML}+1))$ for all $t$, then the MLMC-SGD phase converges to a stationary point. 
Thus, using the trust-phase convergence and switching condition results in Section \ref{sec:trust-conv}, BISTRO is guaranteed to use MLMC-SGD and asymptotically converge to a stationary point.

Further, with strong convexity in Assumption \ref{ass:hf_convex}, we can show that MLMC-SGD converges with a linear rate and provides the constants of convergence. 
These constants will be used to justify the switching condition in BISTRO and help prove that MLMC-SGD converges faster than SGD asymptotically.

\begin{theorem}[MLMC-SGD with convexity converges]
    Using Assumptions \ref{ass:hf_smooth_convex}, \ref{ass:obj_bound}, \ref{ass:hf_convex}, and \ref{ass:var_bound_ml}, let the step $x_{k+1}=x_k-\alpha_k \nabla \mathbf{\hat{R}}_{ML}^{(k)}(x_k)$ with learning rate $\alpha_k=\beta/(\gamma+k)\leq 1/(L_H (W_{V,ML}+1))$ for constants $\beta>1/c_H$ and $\gamma>0$. The expected optimality gap converges with
    \begin{align}
        \mathbb{E}[\mathbf{R}_H(x_k)-\mathbf{R}_\star] \leq \frac{\nu}{\gamma + k} \label{eq:rate_of_convergence}
    \end{align}
    where the constant of convergence is
    \begin{align}
        \nu \equiv \max\left\{ \frac{\beta^2 L_H W_{ML}}{2(\beta c_H-1)}, (\gamma + 1)(\mathbf{R}_H(x_1) - \mathbf{R}_*) \right\}.
    \label{eq:conv_constant}
    \end{align}
\end{theorem}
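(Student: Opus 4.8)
The plan is to specialize the classical SGD-with-strong-convexity analysis (Theorem~4.7 of Bottou et al.~\cite{bottou2018optimization}) to the MLMC gradient estimator, which is legitimate because the preceding theorem already established that $\nabla\mathbf{\hat{R}}_{ML}^{(k)}$ is unbiased for $\nabla\mathbf{R}_H$. First I would write the descent lemma implied by $L_H$-Lipschitz continuity of $\nabla\mathbf{R}_H$ (Assumption~\ref{ass:hf_smooth_convex}) at the update $x_{k+1}=x_k-\alpha_k\nabla\mathbf{\hat{R}}_{ML}^{(k)}(x_k)$, then take the expectation $\mathbb{E}_k$ conditional on $x_k$. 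Unbiasedness turns the inner-product term into $-\alpha_k\|\nabla\mathbf{R}_H(x_k)\|^2$, and Assumption~\ref{ass:var_bound_ml} bounds the squared-norm term by $W_{ML}+(W_{V,ML}+1)\|\nabla\mathbf{R}_H(x_k)\|^2$, giving
\begin{align}
\mathbb{E}_k[\mathbf{R}_H(x_{k+1})]-\mathbf{R}_H(x_k)\leq -\Big(1-\tfrac{1}{2}\alpha_k L_H(W_{V,ML}+1)\Big)\alpha_k\|\nabla\mathbf{R}_H(x_k)\|^2+\tfrac{1}{2}\alpha_k^2 L_H W_{ML}. \nonumber
\end{align}

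Next, the step-size cap $\alpha_k\leq 1/(L_H(W_{V,ML}+1))$ forces the parenthesized factor to be at least $\tfrac12$, so the gradient term is at most $-\tfrac12\alpha_k\|\nabla\mathbf{R}_H(x_k)\|^2$. Then I would invoke strong convexity (Assumption~\ref{ass:hf_convex}) in its Polyak--\L ojasiewicz form, $\|\nabla\mathbf{R}_H(x_k)\|^2\geq 2c_H(\mathbf{R}_H(x_k)-\mathbf{R}_\star)$, and take total expectations. With $a_k\equiv\mathbb{E}[\mathbf{R}_H(x_k)-\mathbf{R}_\star]$ this collapses everything to the one-dimensional recursion
\begin{align}
a_{k+1}\leq (1-\alpha_k c_H)\,a_k+\tfrac{1}{2}\alpha_k^2 L_H W_{ML}. \nonumber
\end{align}

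Finally I would substitute $\alpha_k=\beta/(\gamma+k)$ and establish $a_k\leq \nu/(\gamma+k)$ by induction on $k$. The base case $k=1$ is exactly why the term $(\gamma+1)(\mathbf{R}_H(x_1)-\mathbf{R}_*)$ appears in the $\max$ defining $\nu$ in \eqref{eq:conv_constant}. For the inductive step, writing $\hat{k}\equiv\gamma+k$ and using the hypothesis, the recursion gives $a_{k+1}\leq \tfrac{\hat{k}-\beta c_H}{\hat{k}^2}\nu+\tfrac{\beta^2 L_H W_{ML}}{2\hat{k}^2}$; splitting $\hat{k}-\beta c_H=(\hat{k}-1)-(\beta c_H-1)$, using $(\hat{k}-1)/\hat{k}^2\leq 1/(\hat{k}+1)$, and noting that the leftover terms are nonpositive precisely when $\nu\geq \beta^2 L_H W_{ML}/(2(\beta c_H-1))$ --- the other branch of the $\max$, which is finite because $\beta>1/c_H$ --- closes the induction and yields \eqref{eq:rate_of_convergence}. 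I do not expect a genuine obstacle here; the only care needed is in the conditional-versus-total expectation bookkeeping when combining the descent lemma with the variance bound, and in checking that the two branches of the $\max$ in $\nu$ are exactly what the base case and the inductive step respectively require, so the result really is a direct specialization of the cited SGD theorem once unbiasedness of the MLMC estimator is in hand.
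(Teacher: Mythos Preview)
Your proposal is correct and takes essentially the same approach as the paper: both reduce the claim to Theorem~4.7 of Bottou et al.\ once unbiasedness of the MLMC gradient estimator is established. The paper simply cites that theorem, whereas you spell out its proof (descent lemma, variance bound, PL inequality, and the $1/(\gamma+k)$ induction), but the underlying argument is identical.
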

\begin{proof}
    Application of Theorem 4.7 in Bottou et al. \cite{bottou2018optimization} when the MLMC gradient estimator is unbiased $\mathbb{E}[\nabla \mathbf{\hat{R}}_{ML}^{(k)}(x_k)]=\nabla \mathbf{{R}}_{H}(x_k)$.
\end{proof}

Now that MLMC-SGD has been shown to converge with the provided convergence rate, we can properly define the learning rate parameters $\beta$ and $\gamma$.
It is shown in Appendix \ref{app:opt_learning_rate} that the learning rate constants $\beta,\gamma$ can be optimized to minimize the optimality gap upper bound asymptotically.
The resulting optimal parameters are $\beta_\star = 2/c_H$ and $\gamma_\star = 2L_H (W_{V,ML}+1) c_H^{-1} -1$ such that the constant of convergence becomes
\begin{align}
    \nu \equiv \max\left\{ \frac{2L_H W_{ML}}{c_H^2}, \frac{2L_H (W_{V,ML}+1)}{c_H }(\mathbf{R}_H(x_1) - \mathbf{R}_*) \right\}
    \label{eq:opt_conv_const}
\end{align}
with learning rate
\begin{align}
    \alpha_k=\frac{2}{c_H(\frac{2}{c_H}L_H (W_{V,ML}+1) -1 + k)}.
    \label{eq:new_learning_rate}
\end{align}

We now motivate the use of MLMC-SGD by providing the conditions where MLMC-SGD converges faster than SGD.
Since the constant of convergence $\nu$ depends on the estimator's variance bound $W_{ML}$ (or $W$ for SGD), it can be shown that MLMC-SGD has a faster convergence than SGD under certain cost restrictions of the low-fidelity model.
\begin{theorem}[MLMC-SGD converges faster than SGD asymptotically]
\label{theo:MLMC_faster}
    Let $B\in\mathbb{R}_+$ be the budget of optimization, $C_{ML}\in\mathbb{R}_+$ be the MLMC-SGD per-iteration cost, and $C\in\mathbb{R}_+$ be the SGD per-iteration cost such that $0<C<C_{ML}<B$.
    Also, let the upper bounds for the optimality gaps of SGD and MLMC-SGD be denoted by 
    \begin{align}
        \mathbb{E}[\mathbf{R}_H(x_{k}^{SGD})-\mathbf{R}_{\star}] &\leq Q^{SGD}(B) \quad\textrm{and}\quad \mathbb{E}[\mathbf{R}_H(x_{j}^{ML})-\mathbf{R}_{\star}] &\leq Q^{ML}(B).
    \end{align}
    for SGD iteration index $k$ and MLMC-SGD iteration index $j$ for the same budget $B$.
    Under Assumptions \ref{ass:hf_smooth_convex}, \ref{ass:obj_bound}, \ref{ass:var_bound}, and \ref{ass:var_bound_ml}, if
    \begin{align}
        C_{ML}W_{ML} < CW.
        \label{eq:asy_mlmc_improve_cond}
    \end{align}
    then MLMC-SGD will have a smaller upper bound per unit cost as $B\to\infty$ 
    \begin{align}
        Q^{ML}(B) < Q^{SGD}(B).
    \end{align}
\end{theorem}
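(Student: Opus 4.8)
The plan is to make both optimality-gap bounds explicit functions of the budget $B$ using the convergence rate of the preceding theorem, and then to compare their leading-order behaviour as $B\to\infty$. First I would observe that SGD is exactly the single-fidelity instance of MLMC-SGD in which the gradient estimator is $\nabla\mathbf{\hat{R}}^N_H$ itself, so that Assumption \ref{ass:var_bound_ml} collapses to Assumption \ref{ass:var_bound} with $W_{ML}\mapsto W$ and $W_{V,ML}\mapsto W_V$. Hence the MLMC-SGD-with-convexity theorem applies verbatim to both procedures, and with the optimized learning-rate constants of Appendix \ref{app:opt_learning_rate} the gap after $k$ steps obeys $\mathbb{E}[\mathbf{R}_H(x_k^{ML})-\mathbf{R}_\star]\le \nu^{ML}/(\gamma^{ML}_\star + k)$ and $\mathbb{E}[\mathbf{R}_H(x_k^{SGD})-\mathbf{R}_\star]\le \nu^{SGD}/(\gamma^{SGD}_\star + k)$, where $\nu^{ML}=\max\{2L_HW_{ML}/c_H^2,\ 2L_H(W_{V,ML}+1)c_H^{-1}(\mathbf{R}_H(x_1)-\mathbf{R}_\star)\}$ and $\nu^{SGD}$ is the same expression with $W,W_V$ replacing $W_{ML},W_{V,ML}$.

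Next I would convert budget into iteration count: a budget $B$ buys $k=B/C$ SGD iterations and $j=B/C_{ML}$ MLMC-SGD iterations (floors only perturb lower-order terms), so the achievable upper bounds become $Q^{SGD}(B)=\nu^{SGD}/(\gamma^{SGD}_\star + B/C)$ and $Q^{ML}(B)=\nu^{ML}/(\gamma^{ML}_\star + B/C_{ML})$. Forming the ratio and letting $B\to\infty$ gives
\[
\lim_{B\to\infty}\frac{Q^{ML}(B)}{Q^{SGD}(B)} = \frac{\nu^{ML}}{\nu^{SGD}}\,\lim_{B\to\infty}\frac{\gamma^{SGD}_\star + B/C}{\gamma^{ML}_\star + B/C_{ML}} = \frac{\nu^{ML}\,C_{ML}}{\nu^{SGD}\,C}.
\]
In the noise-dominated regime where the variance term attains the maximum defining each $\nu$, we have $\nu^{ML}=2L_HW_{ML}/c_H^2$ and $\nu^{SGD}=2L_HW/c_H^2$, so the limit reduces to $C_{ML}W_{ML}/(CW)$, which is strictly less than one precisely by hypothesis \eqref{eq:asy_mlmc_improve_cond}. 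A limiting ratio below one produces a threshold $B_0$ with $Q^{ML}(B)<Q^{SGD}(B)$ for all $B>B_0$, which is the claimed asymptotic dominance.

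The step I expect to be the main obstacle --- the only part beyond a routine limit --- is controlling the maximum in $\nu$. If instead the transient term $2L_H(W_{V,ML}+1)c_H^{-1}(\mathbf{R}_H(x_1)-\mathbf{R}_\star)$ were the active one, the unavoidable penalty $C_{ML}>C$ could swamp the variance gain, so the clean conclusion genuinely needs the variance term to dominate for both optimizers; I would therefore state the result in that (practically relevant) regime, in which stochastic noise, rather than the distance of the warm start from the optimum, governs the asymptotic rate. If one further knows $W_{V,ML}\le W_V$ for the MLMC estimator, the transient term adds no difficulty, since both procedures are launched from the same $x_1$ and hence share the initial gap $\mathbf{R}_H(x_1)-\mathbf{R}_\star$.
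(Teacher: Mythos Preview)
Your proposal is correct and follows essentially the same route as the paper. Both arguments apply the MLMC-SGD convergence theorem with the optimized constants of Appendix~\ref{app:opt_learning_rate} to each method, convert the common budget $B$ into iteration counts $B/C$ and $B/C_{ML}$, and compare the resulting bounds in the large-$B$ limit; the paper phrases the comparison as a difference (via a common denominator and the sign of the leading $(W-\theta W_{ML})B$ term, with $\theta=C_{ML}/C$), while you phrase it as a ratio, but the analysis is identical. On the one genuine subtlety---which branch of the $\max$ defining $\nu$ is active---the paper invokes Bottou et~al.\ to assert that the variance term governs asymptotically, which is exactly the noise-dominated regime you isolate explicitly.
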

\begin{proof}
    Proof in Appendix \ref{app:mlmc_better}.
\end{proof}

~Theorem \ref{theo:MLMC_faster} implies that MLMC-SGD converges faster than standard SGD asymptotically when the cost of its gradient estimator, normalized by its variance bound, is lower than that of the single-fidelity estimator. 
In other words, the variance reduction achieved by MLMC must be substantial enough to justify its additional computational cost.
This condition holds practical significance when selecting a low-fidelity simulator to use in BISTRO. 
For BISTRO to outperform SGD asymptotically, the MLMC variance reduction of the gradient estimator must outweigh the per-iteration additional cost.
This is a necessary condition for selecting a low-fidelity simulator for BISTRO which can be verified by using a preliminary pilot study to test the quality of the low-fidelity simulator.

\subsection{Switching Condition}\label{sec:motivation}

In this section, we investigate and motivate the switching condition on Line \ref{line:switch} in Algorithm \ref{alg:BISTRO} under the strong convexity Assumption \ref{ass:hf_convex}. 
The MLMC-SGD convergence rate depends on the constant of convergence $\nu$ such that smaller constants will lead to faster convergence.
If the initial optimality gap $\mathbf{R}_H(x_1) - \mathbf{R}_*$ is large, the right-hand term in Equation \eqref{eq:opt_conv_const} will dominate the convergence rate. 
However, if the optimality gap is arbitrarily small, the left-hand term determines the rate of convergence.
Thus, the left-hand term corresponds to the smallest upper bound on the optimality gap.
By using an initialization phase with the trust-region optimizer, we aim to minimize $(\mathbf{R}_H(x_1) - \mathbf{R}_*)$ with respect to the MLMC-SGD starting position $x_1$ such that the left term dominates and the smallest upper bound is achieved. 
The smallest upper bound on the optimality gap is achieved when the left term dominates
\begin{align}
    \frac{2L_H W_{ML}}{c_H^2} &\geq \frac{2 L_H (W_{V,ML}\!+\!1)}{c_H}(\mathbf{R}_H(x_1)\!-\!\mathbf{R}_*) \implies
    \frac{W_{ML}}{c_H (W_{V,ML}\!+\!1)} \geq \mathbf{R}_H(x_1)\!-\!\mathbf{R}_*. 
    \label{eq:switching_condition}
\end{align}

We now compare this condition to the switching condition in BISTRO on Line \ref{line:switch} $\mathbf{\hat{R}}_{ML}^{(k+1)}(x_{k+1})>\mathbf{\hat{R}}_{ML}^{(k)}(x_{k})$.
The BISTRO switching condition is a surrogate for the theoretical condition in Equation \eqref{eq:switching_condition}.
When the BISTRO condition is satisfied, either the noise of the high-fidelity estimator is large, (and correlation needs to be exploited in MLMC-SGD), or the trust-region proposes a bad direction (and the low-fidelity curvature is no longer useful to exploit).
When the noise of the estimator is large, both Equation \eqref{eq:switching_condition} and Line \ref{line:switch} are easier to satisfy, justifying the BISTRO condition.

\section{Results}\label{sec:results}

BISTRO is now demonstrated on three problems. 
The first is a toy example of a quadratic objective with known oracle constants such as convexity and continuous differentiability constants. 
The toy example provides key insights on the convergence bounds of SGD and MLMC-SGD while demonstrating the utility of the switching condition.
The second is the Forretal example, taken from Ha and Mueller \cite{ha2024adaptive}, used to compare against the bi-fidelity trust-region algorithm ASTRO-BFDF with adaptive sampling rates.
The bi-fidelity extension of ASTRO-DF \cite{shashaani2018astro} is a derivative-free approach that learns when to switch between fidelities for cheaper convergence. 
The Forretal problem extends BISTRO to problems where the continuous differentiability and estimator noise constants are unknown, breaking theoretical guarantees but demonstrating empirical improvement over existing methods.
Finally, the third example is trajectory optimization problem that finds the optimal control to minimize the distance from a target location.
The space shuttle problem demonstrates BISTRO on a higher-dimensional example while demonstrating improvement over MLMC-SGD and BF-SVRG.
In all problems, we minimize the mean of the high-fidelity objective and aim to reduce the cost of achieving a solution close to the optimum.

\subsection{Quadratic Toy Problem}\label{sec:quadratic_problem}

Consider a simple quadratic objective function
\begin{align}
    J_H(x,\xi) = \sum_{i=1}^d x_i^2 + x_i\xi_i \quad\textrm{and}\quad J_L(x,\xi) &= \sum_{i=1}^d \left(\frac{x_i}{1.05}+1\right)^2 + \left(\frac{x_i}{1.05}+1\right)\xi_i,
\end{align} 
where $d=20$. 
The random variables are $\xi_i \sim \mathcal{N}(0, \sigma^2)$ where $\sigma^2 = 0.01$ for all $i\in\{1,\ldots,d\}$. 
The constants required for $\alpha$ and $\lambda$ were analytically and numerically calculated to be $L_H=c_H=2$, $L_L=c_L=1.81$, 
$W=0.2$, $W_{ML}=0.02$, $W_V=W_{V,ML}=1$, and $\mathbf{R}_*=0$.
The high- and low-fidelity samples per iteration to be $N=1$ and $K=M=10$.

The analytical mean of the objective function can be seen in Figure \ref{fig:quad-distance} on the y-axis for the optimization paths of each optimizer across the number of high-fidelity evaluations.
The convergence rate upper bound in \eqref{eq:rate_of_convergence}
is shown with $\nu=\frac{2L_H W_G}{c_H }(\mathbf{R}_H(x_1) - \mathbf{R}_*)$ in blue with the dashed-dotted line,  $\nu=\frac{2L_H W_{ML}}{c_H^2}$ in dashed green for MLMC-SGD, and $\nu=\frac{2L_H W}{c_H^2}$ in dashed blue for SGD.
The MLMC-SGD upper bound in green is smaller than the upper bound for SGD in blue since the MLMC estimator is able to achieve variance reduction.

\begin{figure}
    \centering
    \includegraphics[width=0.8\linewidth]{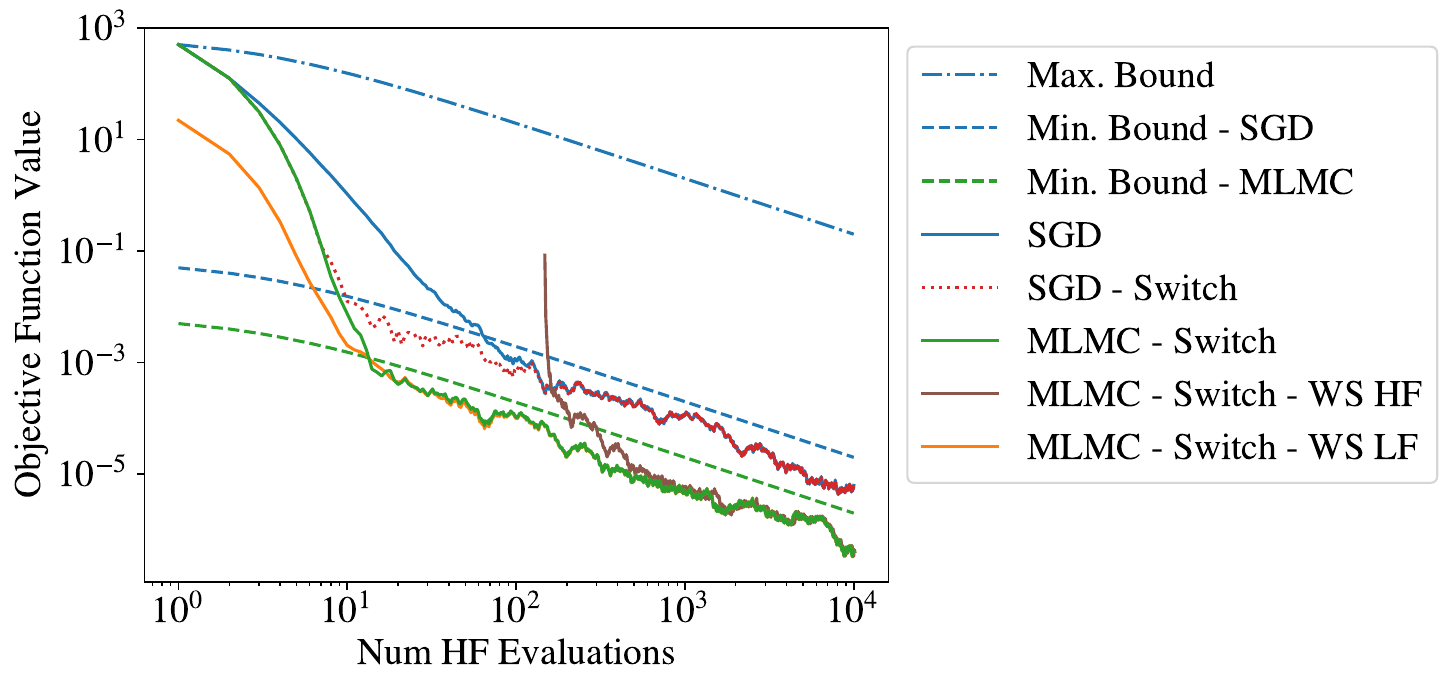}
    \caption{
    \textbf{Quadratic Example -} Analytic expected objective function value for each optimizer across number of high-fidelity evaluations. 
    The maximum and minimum optimality gap upper bounds (Eqs. \eqref{eq:rate_of_convergence}-\eqref{eq:opt_conv_const}) are also provided to validate theoretical results.
    }
    \label{fig:quad-distance}
\end{figure}

SGD is seen in blue where the initial optimality gap bounds the initial convergence until $\nu=\frac{2L_H W}{c_H^2}$ asymptotically dominates.
We also display SGD with the switching condition (Eq. \eqref{eq:switching_condition} with $W$ and $W_V$) in the dotted purple line as SGD-Switch.
The learning rate is held constant in SGD-Switch until the switching condition is met when the learning rate begins to decay.
Since SGD-Switch has a lower objective function value than SGD, this result demonstrates the importance of exploring the domain until an approximate solution is found before the learning rate should decay.
Without BISTRO, SGD still benefits from an exploration phase (constant learning rate) before exploiting the nearby solution (decaying learning rate).

Similarly, we show MLMC-SGD with the switching condition in green. 
The initial convergence is very similar to SGD-Switch and only when near the optimum does MLMC-SGD outperform SGD due to the reduced-noise gradient estimation. 
The use of MLMC-SGD only outperforms SGD asymptotically, and no benefit is seen compared to SGD before reaching an approximate solution.

Figure \ref{fig:quad-distance} also displays a warm-starting technique for MLMC-SGD. 
MLMC-SGD begins at a one-sample SAA solution of the LF (MLMC-Switch-WSLF) in orange and begins at the one-sample SAA solution of the HF in brown. 
Warm-starting with the high-fidelity solution is quite expensive and only converges to the asymptotic rate with 1,000 high-fidelity evaluations. 
Warm-starting with the low-fidelity is a much more practical solution, and offer some pre-asymptotic benefits.

The performance of BISTRO identically matches that of MLMC-SGD in this example.
Since this example is a quadratic objective whose Hessian is the identity matrix, the gradient vector points in the same direction as the trust-region solution. 
This example problem demonstrates the utility of variance-reduction approaches like MLMC and the utility of early exploration with the switching condition.
While the oracle constants are known in this example, the next section provides results for a more practical problem where the continuous differentiability, convexity, and other constants are unknown.

\subsection{Forretal Function}

We now compare BISTRO against the results published by ASTRO-BFDF \cite{ha2024adaptive} on the stochastic one-dimensional Forretal function.
Let the high- and low-fidelity functions be
\begin{align}
    J_H(x,\xi) &= (6x-2)^2\sin(12x-4) + \xi\sqrt{15+0.05x} \nonumber\\
    J_L^\kappa(x,\xi) &= (-2-\kappa^2+4\kappa)(6x-2)^2\sin(12x-4) + 10(x-0.5) - 5 + \xi\sqrt{15+0.05x} \nonumber
\end{align}
where $\kappa\in[0,1]$ is a parameter used to tune how similar the high- and low-fidelity functions are.
The high- and low-fidelity function costs are $1$ and $0.1$ respectively.
Since BISTRO requires the gradient of the functions, we estimate the gradient using finite-difference for a fair comparison with the gradient-free ASTRO-DF and ASTRO-BFDF approaches.
The noise terms are distributed according to $\xi\sim\mathcal{N}(0,1)$.

The total optimization budget for each algorithm is 300.
The optimization hyperparameters for ASTRO-DF and ASTRO-BFDF can be found in Appendix D of \cite{ha2024adaptive}.
BISTRO's hyperparameters are set to $N=K=1$, $M=2$, $\alpha_0=0.001$, $\gamma=2000$, and $\Delta=0.5$.
The number of high- and low-fidelity samples in the MLMC estimator were chosen to be small to reduce the per-iteration cost.
The initial trust-region size can be set as a maximum step size of the algorithm, but was tuned in this example for best performance.
The initial learning rate and decay parameter $\gamma$ were tuned where $\beta$ is implicitly defined by $\alpha_0$.
No practical advice is offered to initialize $\alpha_0$ and $\beta$, since this issue is well-known in SGD methods \cite{xu2019learning}.

\begin{figure}
    \centering
    \includegraphics[width=0.49\linewidth]{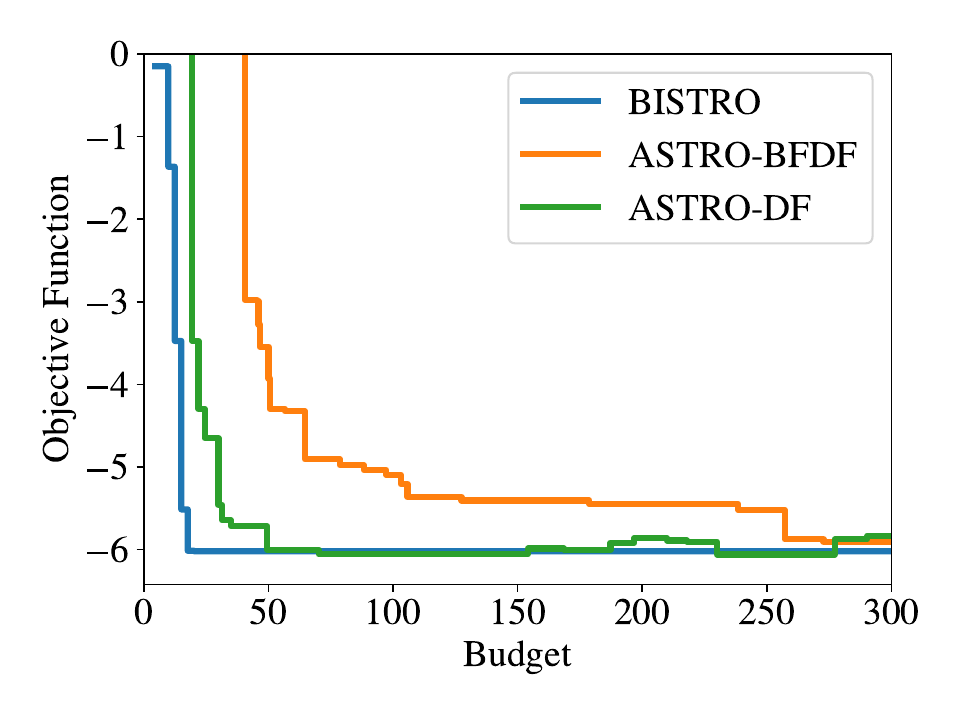}
    \includegraphics[width=0.49\linewidth]{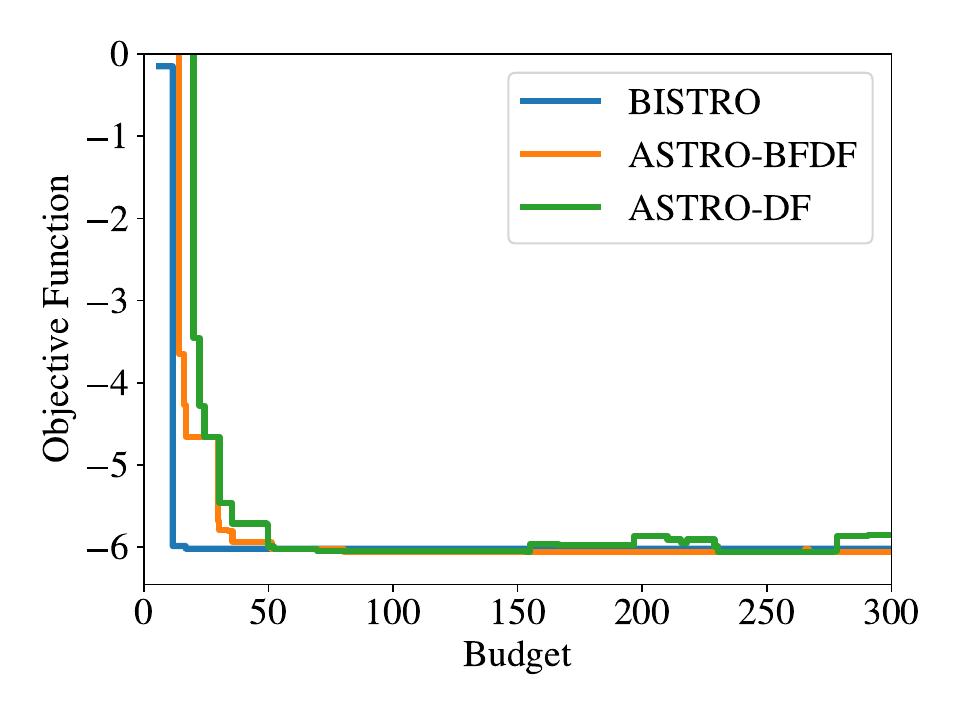}
    \caption{
    \textbf{Forretal Function -} (\textit{Left}) Mean objective function value for $\kappa=0.1$ across 20 runs for each method.
    (\textit{Right}) Mean objective function value for $\kappa=0.9$ across 20 runs for each method.
    }
    \label{fig:forretal}
\end{figure}

The analytical mean of the objective function of ASTRO-DF \cite{shashaani2018astro}, ASTRO-BFDF \cite{ha2024adaptive}, and BISTRO are seen in Figure \ref{fig:forretal} with two realizations of $\kappa=\{0.1,0.9\}$.
Each algorithm starts at $x=0.6$ (Y. Ha, personal correspondence, Nov. 2025) and eventually converges to the minimum.
BISTRO is able to reach the minimum faster than ASTRO-DF and ASTRO-BFDF in both examples.
By using the trust-region to warm-start a reduced-variance SGD algorithm, BISTRO mitigates the cost of the expensive trust-region optimizer.
Both ASTRO algorithms require using a trust-region all the way to the solution, while BISTRO uses SGD asymptotically. 
When $\kappa=0.1$, BISTRO is able to achieve the minimum objective function value 2.8 times faster than ASTRO-DF and 28.4 times faster than ASTRO-BFDF.
When $\kappa=0.9$, BISTRO is able to achieve the minimum objective function 3 times faster than ASTRO-DF and ASTRO-BFDF. 
Since ASTRO-DF and ASTRO-BFDF are adaptive sampling approaches with increasing per-iteration costs, this example demonstrates that decaying learning rate approaches (like BISTRO) may be better suited for problems with low budgets.

\subsection{Space Shuttle Reentry}
We now test BISTRO on a 3 degree-of-freedom simulator of the space shuttle from Chapter 6 in Betts \cite{betts2010practical}. 
Over a 2,000 second flight, we find the angle-of-attack and bank angle trajectories $(\eta,\mu)$ that minimize the distance from a target while penalizing changes in control
\begin{align}
    J_H(x,\xi) = ||(\phi_{f}(x,\xi),\theta_{f}(x,\xi)) - (55^\circ,25^\circ)||_2 + \left|\left|\frac{d}{dt} x\right|\right|,
\end{align}
where the longitude and latitude are ($\phi,\theta$), the control is the angle-of-attack and bank-angle trajectories $x=\{(\eta_t,\mu_t): t\in[0,T]\}$, and $T=2,000$s. 
A direct single-shooting approach \cite{betts2010practical} is used to avoid introducing the states as optimization variables, and the optimization variables are bounded after each iteration to maintain realistic simulations.
We model the control trajectories with a cubic spline interpolation of 10 nodes for each signal such that we have 20 nodes to optimize over to find $x^*\in\mathbb{R}^{20}$.

To introduce uncertainty, the lift and drag coefficients are assumed to be uncertain with a normal distribution of 10\% standard deviation from the percent difference of the nominal values from Betts \cite{betts2010practical}.

The high- and low-fidelity simulators use a fourth-order Runge Kutta scheme where the high- and low-fidelity use time steps of 0.2 seconds and 20 seconds, respectively.
The high- and low-fidelity objective functions take $1.70\times10^{-2}$ and $3.46\times10^{-4}$
seconds to evaluate while the gradient of the high- and low-fidelity objective function takes $8.56\times10^{-1}$ and $9.55\times10^{-3}$ seconds.
Each optimizer was initialized with $(\eta_t,\mu_t)=(19^\circ,-40^\circ)$ for all $t\in\{1,\ldots,20\}$.
BISTRO's hyperparameters are set to $N=1$, $M=K=10$, $\alpha_0=0.01$, $\gamma=100$, and $\Delta=100$.

BISTRO, SGD, MLMC-SGD, and BF-SVRG
were evaluated across 10 random seeds with the 20-50-80 
quantiles of the objective function shown in the left plot of Figure \ref{fig:traj-objs} across the total budgets.
The objective functions were calculated with 100 reference samples.
BISTRO is immediately able to find lower objective function values compared to all other methods and plateaus quickly.
Unlike the previous examples, the objective function value is not expected to converge to zero, and the optimal objective function value is unknown.
BISTRO is able to immediately identify a location with an objective function value matching previous works.
It takes MLMC-SGD, SGD, and BF-SVRG 29 times longer to achieve the similar objective function values as BISTRO. 
The near global search of the trust-region is able to quickly find promising directions for the high-fidelity optimizer to explore, as long as the low-fidelity function is informative. 

\begin{figure}
    \centering
    \includegraphics[width=0.45\linewidth]{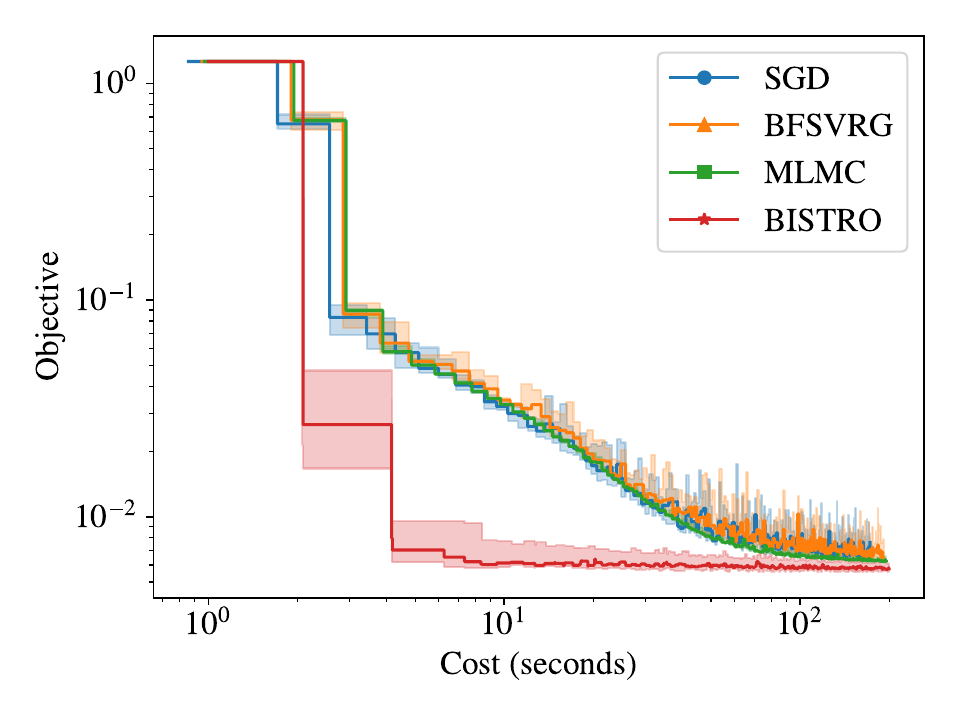}
    \includegraphics[width=0.45\linewidth]{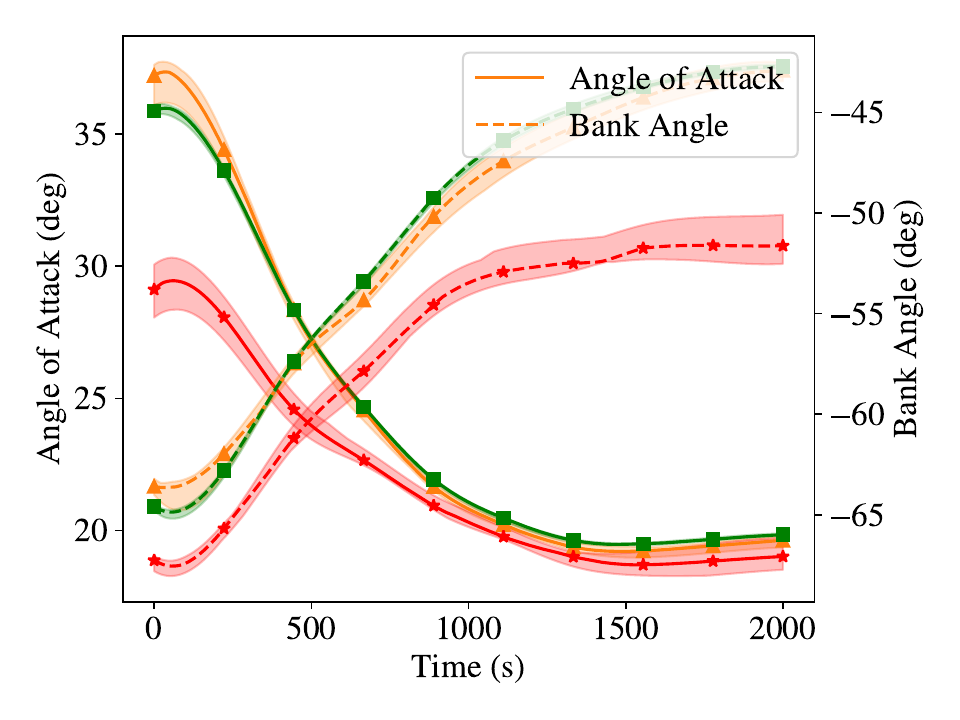}
    \caption{\textbf{Trajectory Optimization -} (\textit{Left}) The objective function values across the budget of the optimizers. (\textit{Right}) The optimal controls found using BF-SVRG, MLMC-SGD, and BISTRO.}
    \label{fig:traj-objs}
\end{figure}

The right plot of Figure \ref{fig:traj-objs} displays the final controls learned by MLMC-SGD, BISTRO, and BF-SVRG. 
The nodes are indicated by the 10 points and the 20-50-80 quantiles are shown in the bands.
Notably, BISTRO did not converge to the same control trajectories as BF-SVRG and MLMC. 
BISTRO achieved a lower final objective function value by finding a separate local minimum through the trust-region exploration phase.

\FloatBarrier
\section{CONCLUSION}

This paper introduced BISTRO, a novel bi-fidelity optimization under uncertainty algorithm that simultaneously exploits both curvature information in the design space and correlation structure in the random space.
By combining a trust-region approach with control variate-based variance reduction, BISTRO addresses a fundamental limitation of existing methods that typically leverage only one of these two complementary sources of information.

Our theoretical analysis demonstrates that BISTRO maintains the same asymptotic convergence rates as reduced-variance stochastic gradient descent while offering dramatically improved finite-sample performance.
This two-phase approach overcomes the key limitation of traditional trust-region methods, which become inefficient near the optimum where low-fidelity curvature information loses its utility.

The numerical experiments validate both the theoretical properties and practical benefits of BISTRO.
On the 20-dimensional space shuttle reentry optimization problem, BISTRO achieved solutions 29 times faster than existing single- and bi-fidelity stochastic gradient methods.
These results demonstrate that joint exploitation of both design-space curvature and random-space correlation can yield substantial computational savings in realistic engineering applications.

Several avenues for future research emerge from this work.
First, extending the multi-level Monte Carlo estimator to more optimal approximate control variate methods could further improve variance reduction.
Since BISTRO can utilize any unbiased estimator, future implementations could incorporate robust optimization objectives such as probability of failure or conditional value at risk.
Second, we would like to extend the ideas presented here to handle constraints, representing an important direction for practical applications.

\section{ACKNOWLEDGMENTS}
This work was supported by a NASA subcontract 
at the University of Michigan titled, ``Novel variance reduction approaches for multifidelity uncertainty quantification in high-fidelity Entry, Descent, and Landing" under grant number 80NSSC22K1007.

\bibliographystyle{siamplain}
\bibliography{references}

\appendix

\section{Appendix - Optimal MLMC-SGD parameters}\label{app:opt_learning_rate}

According to Bottou et al. \cite{bottou2018optimization}, the constant of convergence is $\nu = \frac{\beta^2 L_H W}{2(\beta c_H-1)} $
asymptotically as $k\to \infty$, suggesting that the initial optimality gap (right term in Equation \eqref{eq:conv_constant}) does not impact the asymptotic convergence.
Thus, to minimize the asymptotic upper bound, we minimize $\nu$ by tuning the learning rate parameters $\beta$ and $\gamma$.
With respect to $\beta$, the constant $\nu$ is minimized at $\beta=\frac{2}{c_H}$. 
Since $\gamma$ is independent of $\nu$, it is a free variable that we are able to choose. 
In practice, maximizing the initial learning rate $\alpha_0$ is common to allow larger steps in early iterations for more exploration.
Thus, we set the initial learning rate to its upper limit $\alpha_0=\frac{1}{L_H W_G}$.
Therefore, we use $\alpha=\beta/(\gamma+k)$ to solve for the parameter $\gamma= 2\frac{L_H W_G}{c}-1$.

\section{Appendix - MLMC-SGD Converges Faster than SGD} \label{app:mlmc_better}

We find when the upper bound on the optimality gap for MLMC-SGD is smaller asymptotically than SGD.
First, we must show when the upper bound for MLMC-SGD is smaller than SGD at a given budget $B$.

\begin{theorem}[MLMC-SGD converges faster than SGD at a given budget]
\label{theo:mlmc_sgd}
    Let $W_{G,ML}=W_{V,ML}+1$ and $W_G=W_V+1$.
    Under Assumptions \ref{ass:hf_smooth_convex} \ref{ass:obj_bound}, \ref{ass:var_bound},\ref{ass:var_bound_ml}, and \ref{ass:hf_convex}, MLMC-SGD will have a smaller upper bound on the optimality gap than SGD at a budget $B$ if 
    \begin{align}
        \frac{W}{c_H W_G} &\geq \mathbf{R}_H(x_1) - \mathbf{R}_*, \quad\quad\quad \frac{W_{ML}}{c_H W_{G,ML}} \geq \mathbf{R}_H(x_1) - \mathbf{R}_*
    \end{align}
    and
    \begin{align}
        0 &< \frac{ W}{ 2 c_H^{-1} L_H W_{G} - 1 + B } - \frac{\theta W_{ML}}{ 2\theta c_H^{-1} L_H W_{G,ML} - \theta + B }
        \label{eq:MLMC_improve_cond}
    \end{align}
    where $\theta \equiv C_{ML}/C$ is the cost per iteration ratio for MLMC-SGD and SGD.
\end{theorem}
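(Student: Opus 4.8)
The plan is to reduce the claim to an algebraic comparison of the two optimality-gap upper bounds, once the optimal learning-rate constants have been substituted and the maxima defining the convergence constants $\nu$ have been resolved. First I would invoke the theorem ``MLMC-SGD with convexity converges'' together with its single-fidelity analogue --- Theorem 4.7 of Bottou et al.\ \cite{bottou2018optimization} applied with the single-fidelity variance bound of Assumption \ref{ass:var_bound} in place of Assumption \ref{ass:var_bound_ml} --- which give, at iteration $k$, $\mathbb{E}[\mathbf{R}_H(x_k^{ML})-\mathbf{R}_\star]\le \nu_{ML}/(\gamma_{ML}+k)$ and $\mathbb{E}[\mathbf{R}_H(x_k^{SGD})-\mathbf{R}_\star]\le \nu_{SGD}/(\gamma_{SGD}+k)$. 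Substituting the optimal parameters from Appendix \ref{app:opt_learning_rate}, the shift constants are $\gamma_{SGD}=2c_H^{-1}L_H W_G-1$ and $\gamma_{ML}=2c_H^{-1}L_H W_{G,ML}-1$ (both positive, since $L_H\ge c_H$ for an $L_H$-smooth, $c_H$-strongly convex function and $W_G,W_{G,ML}\ge 1$), while $\nu_{SGD}=\max\{2L_H W/c_H^2,\ 2c_H^{-1}L_H W_G(\mathbf{R}_H(x_1)-\mathbf{R}_\star)\}$ and $\nu_{ML}$ is the same expression with $W,W_G$ replaced by $W_{ML},W_{G,ML}$.

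Next I would observe that the first two hypotheses are exactly the statements that the first argument of each maximum dominates: $W/(c_H W_G)\ge \mathbf{R}_H(x_1)-\mathbf{R}_\star$ forces $\nu_{SGD}=2L_H W/c_H^2$, and $W_{ML}/(c_H W_{G,ML})\ge \mathbf{R}_H(x_1)-\mathbf{R}_\star$ forces $\nu_{ML}=2L_H W_{ML}/c_H^2$. Then I would translate budget into iteration count: normalizing the SGD per-iteration cost to $C=1$ (only the ratio $\theta=C_{ML}/C$ survives into the bound), SGD performs $B$ steps at budget $B$ and MLMC-SGD performs $B/\theta$ steps. Because $\nu/(\gamma+k)$ is strictly decreasing in $k$, the tightest guarantee available under budget $B$ is the one obtained at the maximal iteration count, so $Q^{SGD}(B)=\nu_{SGD}/(\gamma_{SGD}+B)$ and $Q^{ML}(B)=\nu_{ML}/(\gamma_{ML}+B/\theta)$. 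Imposing $Q^{ML}(B)<Q^{SGD}(B)$ and cancelling the common factor $2L_H/c_H^2$ reduces to $W_{ML}/(\gamma_{ML}+B/\theta)<W/(\gamma_{SGD}+B)$; multiplying the numerator and denominator of the left-hand term by $\theta$ and expanding $\gamma_{SGD},\gamma_{ML}$ gives precisely \eqref{eq:MLMC_improve_cond}, which establishes the stated sufficient condition.

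The only delicate part is the bookkeeping between iteration index and budget under the two distinct per-iteration costs, together with the appeal to monotonicity of the bound in $k$ that lets each guarantee be evaluated at the full budget; the non-integrality of $B$ and $B/\theta$ is harmless in a statement of this (asymptotic-style) character and would be flagged in passing. Beyond this accounting and the routine verification that the two dominance hypotheses really do select the intended branch of each maximum, I anticipate no substantive obstacle.
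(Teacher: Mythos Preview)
Your proposal is correct and follows essentially the same route as the paper: use the two dominance hypotheses to collapse each $\nu$ to its noise-driven branch $2L_H W/c_H^2$ (resp.\ $2L_H W_{ML}/c_H^2$), convert the common budget $B$ into iteration counts $B$ and $B/\theta$, and then compare $\nu/(\gamma+B)$ with $\theta\nu_{ML}/(\theta\gamma_{ML}+B)$ after cancelling $2L_H/c_H^2$. Your explicit appeal to monotonicity in $k$ and the remark on non-integer iteration counts are nice touches the paper leaves implicit.
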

\begin{proof}
    When multiplied by $\frac{2L_H W_G}{c_H}$, the required conditions become
    \begin{align}
        \frac{2L_H W}{c_H^2} \geq \frac{2L_H W_G}{c_H}(\mathbf{R}_H(x_1) - \mathbf{R}_*) ~~~\textrm{and}~~ \frac{2L_H W_{ML}}{c_H^2} \geq \frac{2L_H W_{G,ML}}{c_H}(\mathbf{R}_H(x_1) - \mathbf{R}_*).
    \end{align}
    From Equation \eqref{eq:opt_conv_const}, these conditions imply the constants of convergence for SGD and MLMC-SGD are $\nu=\frac{2L_HW}{c_H^2}$ and $\nu_{ML} = \frac{2L_HW_{ML}}{c_H^2}$. 
    Now, let the cost of a single iteration of MLMC-SGD be greater than the cost of a single SGD iteration $C_{ML} \geq C$, and let the ratio of per-iteration costs be
    $\theta = C_{ML}/C$.
    The current budget of each algorithm is denoted as $B$ (in units of a single high-fidelity gradient cost).
    Using the definitions of $\nu, \gamma, \nu_{ML}, \gamma_{ML}$,
    \begin{align}
        \frac{\theta W_{ML}}{ 2\theta c^{-1} L_H W_{G,ML} - \theta + B } &< \frac{ W}{ 2 c_H^{-1} L_H W_{G} - 1 + B } \implies
        \frac{\theta\nu_{ML}}{\theta\gamma_{ML} +B} < \frac{\nu}{\gamma +B} \label{eq:sgd_mlmc_bound}.
    \end{align}
    
    We compare SGD and MLMC-SGD at a given budget $B$.
    Let $k_{SGD}$ be the number of SGD iterations that equate to the given budget $k_{SGD}=B$.
    Then, if one MLMC-SGD iteration has the same budget as $\theta$ SGD iterations, then the iteration index of MLMC-SGD is
        $k_{ML} = \frac{k_{SGD}}{\theta}= \frac{B}{\theta}$
    where $k_{ML}$ is the equivalent-cost iteration index of MLMC-SGD.
    Then,
    \begin{align}
            \mathbf{R}_H(w_{k_{ML}}) - \mathbf{R}_* \leq \frac{\nu_{ML}}{\gamma_{ML} + k_{ML}} = \frac{\theta\nu_{ML}}{\theta\gamma_{ML} +B} ~~~~\textrm{and}~~~~
            \mathbf{R}_H(w_{k_{SGD}}) - \mathbf{R}_* \leq \frac{\nu}{\gamma +B}. \nonumber
    \end{align}
    Therefore, from Equation \eqref{eq:sgd_mlmc_bound}, MLMC-SGD has a smaller upper bound on the expected optimality gap than SGD with a budget $B$.
\end{proof}

We now provide proof of Theorem \ref{theo:MLMC_faster} to show that MLMC-SGD has a smaller upper bound on the optimality gap than SGD asymptotically.
\begin{proof}[Proof of Theorem \ref{theo:MLMC_faster}]
    Bottou et al. \cite{bottou2018optimization} states that 
    \begin{align}
        \frac{W}{c_H W_G} &\geq \mathbf{R}_H(x_1) - \mathbf{R}_* \quad\quad\textrm{and}\quad\quad \frac{W_{ML}}{c_H W_{G,ML}} \geq \mathbf{R}_H(x_1) - \mathbf{R}_*
    \end{align}
    are true asymptotically as $B\to \infty$. Thus, to use Theorem \ref{theo:mlmc_sgd}, we show that 
    \begin{align}
        0 &< \frac{ W}{ 2 c_H^{-1} L_H W_{G} - 1 + B } - \frac{\theta W_{ML}}{ 2\theta c_H^{-1} L_H W_{G,ML} - \theta + B }
    \end{align}
    is satisfied asymptotically. We find a common denominator
    \begin{align}
        0 &< \frac{ W(2\theta c_H^{-1} L_H W_{G,ML} - \theta + B)}{ (2 c_H^{-1} L_H W_{G} - 1 + B)(2\theta c_H^{-1} L_H W_{G,ML} - \theta + B) } \\
        &\quad\quad- \frac{\theta W_{ML}(2 c_H^{-1} L_H W_{G} - 1 + B)}{ (2\theta c_H^{-1} L_H W_{G,ML} - \theta + B)(2 c_H^{-1} L_H W_{G} - 1 + B) } \nonumber\\
        0 &< \frac{ (W - \theta W_{ML})B + (2\theta c_H^{-1} L_H W_{G,ML} - \theta) - \theta W_{ML}(2 c_H^{-1} L_H W_{G} - 1)}{ (2 c_H^{-1} L_H W_{G} - 1 + B)(2\theta c_H^{-1} L_H W_{G,ML} - \theta + B) }.
    \end{align}
    Therefore, as the budget $B\to\infty$, the above inequality holds when the dominant term in the numerator $(W - \theta W_{ML})>0$. 
    Therefore, asymptotically, MLMC-SGD has a smaller upper bound on the optimality gap than SGD when
        $C_{ML}W_{ML} < CW$
    using Theorem \ref{theo:mlmc_sgd}.
\end{proof}

\section{Appendix - Corrected Function} \label{app:corr_func}

We show that $\mathbf{\hat{R}}_C(x)$ is strongly convex 
with constant $c_L$ under Assumption \ref{ass:lf_smooth_convex}
For all $y,z\in\mathbb{R}^d$ given an $x$
\begin{align}
    \mathbf{\hat{R}}_C(y) &= \mathbf{\hat{R}}^M_L(y) + (\mathbf{\hat{R}}^N_H(x)-\mathbf{\hat{R}}^N_L(x)) + (\mathbf{\hat{G}}^N_H(x)-\mathbf{\hat{G}}_L^N(x))^\top (y-x) \nonumber\\
    &\geq \mathbf{\hat{R}}^M_L(z) + \nabla\mathbf{\hat{R}}_L^{M}(z)^\top (y-z) + \frac{1}{2}\hat{c}_L||y-z||^2 \nonumber
    \\ &\quad\quad 
    + (\mathbf{\hat{R}}^N_H(x)-\mathbf{\hat{R}}^N_L(x)) + (\mathbf{\hat{G}}^N_H(x)-\mathbf{\hat{G}}^N_L(x))^\top (y-x) \\
     &= \mathbf{\hat{R}}_C(z)
     + (\nabla\mathbf{\hat{R}}_L^{M}(z) + \mathbf{\hat{G}}^N_H(x)-\mathbf{\hat{G}}^N_L(x)
     )^\top (y-z) + \frac{1}{2}\hat{c}_L||y-z||^2 \\
     &= \mathbf{\hat{R}}_C(z)
     + \nabla \mathbf{\hat{R}}_C(z) ^\top (y-z) + \frac{1}{2}\hat{c}_L||y-z||^2.
\end{align}
Thus, $\mathbf{\hat{R}}_C(x)$ is convex.

\section{Trust-Phase Convergence}\label{app:trust_conv}

We begin by finding an expression for the step taken by the trust-phase. The trust-phase minimizes $\mathbf{\hat{R}}_C$ corrected at $x_k$. Due to strong convexity of $\mathbf{\hat{R}}_C$ (shown in Appendix \ref{app:corr_func}), there exists a unique global minimum $x$ that satisfies the optimality condition
\begin{align}
    \nabla \mathbf{\hat{R}}_C(x) = \nabla \mathbf{\hat{R}}_L^K(x) - \nabla \mathbf{\hat{R}}_L^K(x_k)+ \nabla \mathbf{\hat{R}}_H^N(x_k)=0.
\end{align}
Let us then define an effective step $s_k = x - x_k.$
By the fundamental theorem of calculus and a change of variables,
\begin{align}
    \nabla \mathbf{\hat{R}}_L^M(x_k+s_k) - \nabla \mathbf{\hat{R}}_L^M(x_k) = \int_0^1 \nabla^2\mathbf{\hat{R}}_L^M(x_k+ts_k)s_k dt.
\end{align}
We plug this expression into the optimality condition to find
\begin{align}
    0&=\int_0^1 \nabla^2\mathbf{\hat{R}}_L^M(x_k+ts_k)s_k dt + \nabla \mathbf{\hat{R}}_H^N(x_k)\\
    s_k &= -\left(\int_0^1 \nabla^2\mathbf{\hat{R}}_L^M(x_k+ts_k) dt\right)^{-1}\nabla \mathbf{\hat{R}}_H^N(x_k).
\end{align}
Let $B_k \equiv\left(\int_0^1 \nabla^2\mathbf{\hat{R}}_L^M(x_k+ts_k) dt\right)^{-1}$ such that $s_k = -B_k\nabla \mathbf{\hat{R}}_H^N(x_k)$.
By Assumption \ref{ass:lf_smooth_convex}, the eigenvalues of $\nabla^2\mathbf{\hat{R}}_L^M(x)$ are bounded above and below by 
$L_L$ and $c_L$ for all $x$.
Thus, for any unit vector $u\in\mathbb{R}^d$,
\begin{align}
    c_L&\leq u^T\nabla^2\mathbf{\hat{R}}_L^M(x_k+ts_k)u \leq L_L \\
    \int_0^1c_L dt&\leq \int_0^1u^T\nabla^2\mathbf{\hat{R}}_L^M(x_k+ts_k)u dt\leq \int_0^1L_L dt \\
    c_L &\leq u^T\left(\int_0^1\nabla^2\mathbf{\hat{R}}_L^M(x_k+ts_k)dt\right)u\leq L_L.
\end{align}
Thus, the eigenvalues of $\int_0^1 \nabla^2\mathbf{\hat{R}}_L^M(x_k+ts_k) dt$ are similarly bounded by $L_L$ and $c_L$.
Finally, the eigenvalues of $B_k$ are upper and lower bounded by $1/c_L$ and $1/L_L$.

Now, let the step taken by the trust-phase be represented as $x_{k+1} = x_k + \lambda s_k$ for some learning rate $\lambda>0$.
By continuous differentiability
of the high-fidelity risk metric,
\begin{align}
    \mathbf{R}_H(x_{k+1}) &\leq \mathbf{R}_H(x_{k}) +\nabla\mathbf{R}_H(x_{k})^T(x_{k+1}-x_k) + \frac{L}{2}||x_{k+1}-x_k||^2 \\
    \mathbf{R}_H(x_{k+1}) &\leq \mathbf{R}_H(x_{k}) +\lambda\nabla\mathbf{R}_H(x_{k})^Ts_k + \frac{L\lambda^2}{2}||s_k||^2 \\
    \mathbf{R}_H(x_{k+1}) &\leq \mathbf{R}_H(x_{k}) -\lambda\nabla\mathbf{R}_H(x_{k})^T B_k\nabla \mathbf{\hat{R}}_H^N(x_k) + \frac{L\lambda^2}{2}||B_k\nabla \mathbf{\hat{R}}_H^N(x_k)||^2. 
\end{align}
We take the conditional expectation given $x_k$ and define $\Delta_{k+1}\equiv\mathbb{E}[\mathbf{R}_H(x_{k+1})]-\mathbf{R}_\star$.
Thus,
\begin{align}
    \Delta_{k+1} &\leq \Delta_{k} -\lambda\nabla\mathbf{R}_H(x_{k})^T \mathbb{E}[B_k\nabla \mathbf{\hat{R}}_H^N(x_k)] + \frac{L\lambda^2}{2}\mathbb{E}[||B_k\nabla \mathbf{\hat{R}}_H^N(x_k)||^2] \\
    &\leq \Delta_{k} -\lambda\nabla\mathbf{R}_H(x_{k})^T \mathbb{E}[B_k]\nabla \mathbf{R}_H(x_k) + \frac{L\lambda^2}{2}\mathbb{E}[||B_k\nabla \mathbf{\hat{R}}_H^N(x_k)||^2]
\end{align}
since $B_k$ and $\nabla \mathbf{\hat{R}}_H^N(x_k)$ are independent given $x_k$ because $B_k$ is evaluated with $M$ independent samples from the $N$-sample estimator in $\nabla \mathbf{\hat{R}}_H^N(x_k)$. 
For any vector $u\in\mathbb{R}^d$, the norm of a matrix-vector product is bounded with the matrix eigenvalues $(1/L_L)||u||\leq||B_ku||\leq (1/c_L)||u||$.
Thus, we use the eigenvalue bounds, and defining $W_G\equiv W_V+1$ to obtain
\begin{align}
    \Delta_{k+1} &\leq \Delta_{k} -\frac{\lambda}{L_L}||\nabla \mathbf{R}_H(x_k)||^2 + \frac{L\lambda^2}{2c_L^2}\mathbb{E}[||\nabla \mathbf{\hat{R}}_H^N(x_k)||^2] \nonumber\\
    \Delta_{k+1} &\leq \Delta_{k} -\frac{\lambda}{L_L}||\nabla \mathbf{R}_H(x_k)||^2 + \frac{L\lambda^2}{2c_L^2}(W+(W_V+1)||\nabla\mathbf{R}_H(x_k)||^2) \nonumber\\
    &\leq \Delta_{k} +(-\frac{\lambda}{L_L}+ \frac{L\lambda^2}{2c_L^2}W_G)||\nabla \mathbf{R}_H(x_k)||^2 + \frac{L\lambda^2}{2c_L^2}W \nonumber\\
    &\label{eq:trust_decay}
\end{align}
by assumption \ref{ass:var_bound}. 
By summing from $k\in\{1,\ldots,P\}$ and using Assumption \ref{ass:obj_bound},
\begin{align}
    \mathbb{E}[\mathbf{R}_H(x_{k+1}) -\mathbf{R}_H(x_{k})] &\leq (-\frac{\lambda}{L_L}+ \frac{L\lambda^2}{2c_L^2}W_G)||\nabla \mathbf{R}_H(x_k)||^2 + \frac{L\lambda^2}{2c_L^2}W \nonumber\\
    \mathbb{E}[\mathbf{R}_H(x_{P+1}) -\mathbf{R}_H(x_{1})] &\leq (-\frac{\lambda}{L_L}+ \frac{L\lambda^2}{2c_L^2}W_G)\sum_{k=1}^P||\nabla \mathbf{R}_H(x_k)||^2 + P\frac{L\lambda^2}{2c_L^2}W \nonumber\\
    \mathbf{R}_\star -\mathbf{R}_H(x_{1}) &\leq (-\frac{\lambda}{L_L}+ \frac{L\lambda^2}{2c_L^2}W_G)\sum_{k=1}^P||\nabla \mathbf{R}_H(x_k)||^2 + P\frac{L\lambda^2}{2c_L^2}W. \nonumber
\end{align}
Thus, by setting $\lambda=\frac{c_L^2}{L L_L W_G}$
\begin{align}
    \mathbf{R}_\star -\mathbf{R}_H(x_{1}) &\leq \left(-\frac{c_L^2}{L_L^2 LW_G}+ \frac{1}{2}\frac{c_L^2}{L_L^2 LW_G}\right)\sum_{k=1}^P||\nabla \mathbf{R}_H(x_k)||^2 + \frac{P}{2}\frac{Wc_L^2}{L_L^2 LW_G^2} \nonumber\\
    \mathbf{R}_\star -\mathbf{R}_H(x_{1}) &\leq \left(-\frac{c_L^2}{2L_L^2 LW_G}\right)\sum_{k=1}^P||\nabla \mathbf{R}_H(x_k)||^2 + \frac{P}{2}\frac{Wc_L^2}{L_L^2 LW_G^2}. \nonumber
\end{align}

Finally, by rearranging and taking the limit
\begin{align}
    \frac{c_L^2}{2L_L^2 LW_G}\sum_{k=1}^P||\nabla \mathbf{R}_H(x_k)||^2 &\leq  \frac{P}{2}\frac{Wc_L^2}{L_L^2 LW_G^2} - (\mathbf{R}_\star -\mathbf{R}_H(x_{1})) \nonumber\\
    \frac{1}{P}\sum_{k=1}^P||\nabla \mathbf{R}_H(x_k)||^2 &\leq  \frac{LW_GWL_L^2c_L^2}{L_L^2c_L^2LW_G^2} - \frac{2LL_L^2W_G(\mathbf{R}_\star -\mathbf{R}_H(x_{1}))}{Pc_L^2} \nonumber\\
    &\leq  \frac{W}{W_G} - \frac{2LL_L^2W_G(\mathbf{R}_\star -\mathbf{R}_H(x_{1}))}{Pc_L^2} \nonumber\\
    \lim_{P\to\infty} \mathbb{E}\left[\frac{1}{P}\sum_{k=1}^P||\nabla \mathbf{R}_H(x_k)||^2\right] &\leq \frac{W}{W_G}. \nonumber
\end{align}

\section{Convergence of trust-phase with convexity}\label{app:trust_convex}

We begin from Equation \eqref{eq:trust_decay} in Appendix \ref{app:trust_conv}.
By letting $\lambda=\frac{c_L^2}{L L_L W_G}$,
\begin{align}
    \Delta_{k+1} &\leq \Delta_{k} +(-\frac{\lambda}{L_L}+ \frac{L\lambda^2}{2c_L^2}W_G)||\nabla \mathbf{R}_H(x_k)||^2 + \frac{L\lambda^2}{2c_L^2}W\\ 
    &\leq \Delta_{k} +(-\frac{c_L^2}{L L_L^2 W_G}+ \frac{1}{2}\frac{ c_L^2}{L L_L^2 W_G})||\nabla \mathbf{R}_H(x_k)||^2 + \frac{c_L^2}{2L L_L^2 W_G^2}W\\ 
    &\leq \Delta_{k} -\frac{ c_L^2}{2L L_L^2 W_G}||\nabla \mathbf{R}_H(x_k)||^2 + \frac{c_L^2}{2L L_L^2 W_G^2}W.
\end{align}
By the property of $c_H$-strong convexity ($2c_H\Delta_k\leq||\nabla \mathbf{R}_H(x_k)||^2$) and Assumption \ref{ass:hf_convex},
\begin{align}
    \Delta_{k+1} &\leq \Delta_{k} -\frac{ c_L^2}{2L L_L^2 W_G}2c_H \Delta_k + \frac{c_L^2}{2L L_L^2 W_G^2}W\\ 
    &\leq \left( 1 -\frac{ c_H c_L^2}{L L_L^2 W_G} \right)\Delta_k + \frac{c_L^2}{2L L_L^2 W_G^2}W.
\end{align}
Now, by subtracting $\frac{W}{2W_G c_H}$ from both sides
\begin{align}
    \left(\Delta_{k+1} - \frac{W}{2W_Gc_H}\right)&\leq \left( 1 -\frac{ c_H c_L^2}{L L_L^2 W_G} \right)\Delta_k + \frac{c_L^2}{2L L_L^2 W_G^2}W - \frac{W}{2W_Gc_H}\\
    \left(\Delta_{k+1} - \frac{W}{2W_Gc_H}\right)&\leq \left( 1 -\frac{ c_H c_L^2}{L L_L^2 W_G} \right)\left(\Delta_k - \frac{W}{2W_Gc_H}\right) \label{eq:final_form}
\end{align}
which is a contraction if $0 < \left( 1 -\frac{ c_H c_L^2}{L L_L^2 W_G} \right) < 1$.
By definition, $L_H\geq c_H$, $L_L\geq c_L$ and $W_G>1$.
Thus,
\begin{align}
    L_H \geq c_H >0 \\
    -\frac{L_H}{c_L^2} W_G < -\frac{L_H}{L_L^2} <0 \\
    -1 < -\frac{c_L^2}{W_G L_L^2} <0 \\
    0 < 1-\frac{c_L^2}{W_G L_L^2} < 1 
\end{align}
we have a contraction.
Finally, applying recursion with \eqref{eq:final_form} we find
\begin{align}
    \lim_{k\to\infty} \Delta_k \leq \frac{W}{2W_G c_H}.
\end{align}

\end{document}